\newcommand{\FF}{{\mathbb{F}}}
\newcommand{\bB}{{\mathbf B}}
\newcommand{\bG}{{\mathbf G}}
\newcommand{\bO}{{\mathbf O}}
\newcommand{\bZ}{{\mathbf Z}}
\newcommand{\cE} {\mathcal{E}}
\newcommand{\fA} {\mathfrak A}
\newcommand{\fS} {\mathfrak S}
\newcommand{\IBr}{{\operatorname{IBr}}}
\newcommand{\Irr}{{\operatorname{Irr}}}
\newcommand{\SC}{{\operatorname{sc}}}
\newcommand{\St}{{\operatorname{St}}}
\newcommand{\Syl}{{\operatorname{Syl}}}
\newcommand{\SL}{{\operatorname{SL}}}
\newcommand{\PSL}{{\operatorname{PSL}}}
\newcommand{\SU}{{\operatorname{SU}}}
\newcommand{\PGU}{{\operatorname{PGU}}}
\newcommand{\PSU}{{\operatorname{PSU}}}
\newcommand{\PSp}{{\operatorname{PSp}}}
\newcommand{\tw}[1]{{}^#1\!}
\let\eps=\epsilon
\let\ga=\gamma
\let\vhi=\varphi
\def\cent#1#2{{\bf C}_{#1}(#2)}
\def\norm#1#2{{\bf N}_{#1}(#2)}
\newtheorem{thm}{Theorem}[section]
\newtheorem{lem}[thm]{Lemma}
\newtheorem{cor}[thm]{Corollary}
\newtheorem{prop}[thm]{Proposition}
\newtheorem*{thmA}{Theorem A}
\newtheorem*{property}{Property $(*)$}
\theoremstyle{definition}
\begin{document}

%%%%%%%%%%%%%%%%%%%%%%%%%%%%%%%%%%%%%%%%%%%%%%%%%%%%%%%%%%%%%%%%%%%%%%%%%
\title[Decomposition numbers and Sylow normalisers]{Decomposition numbers in the principal block\\ and Sylow normalisers}
%%%%%%%%%%%%%%%%%%%%%%%%%%%%%%%%%%%%%%%%%%%%%%%%%%%%%%%%%%%%%%%%%%%%%%%%%

\author{Gunter Malle}
\address[G. Malle]{FB Mathematik, RPTU, Postfach 3049,
  67653 Kaisers\-lautern, Germany.}
\email{malle@mathematik.uni-kl.de}

\author{Noelia Rizo}
\address[N. Rizo]{Departament de Matem\`atiques, Universitat de Val\`encia,
 46100 Burjassot, Val\`encia, Spain}
 \email{noelia.rizo@uv.es}

\begin{abstract}
If $G$ is a finite group and $p$ is a prime number, we investigate the
relationship between the $p$-modular decomposition numbers of characters of
height zero in the principal $p$-block of $G$ and the $p$-local structure of
$G$. In particular we prove that, under certain conditions on the non-abelian
composition factors of $G$, $d_{\chi 1_G}\neq 0$ for all irreducible characters
$\chi$ of degree prime to~$p$ in the principal $p$-block of $G$ if, and only if,
the normaliser of a Sylow $p$-subgroup of $G$ has a normal $p$-complement.
\end{abstract}

\thanks{The first author gratefully acknowledges financial support by the DFG --- Project-ID 286237555.
The second author is supported by Ministerio de Ciencia e Innovaci\'on (Grant PID2019-103854GB-I00 and Grant PID2022-137612NBI00 funded by MCIN/AEI/10.13039/501100011033 and “ERDF A way of making Europe”) and a CDEIGENT grant CIDEIG/2022/29 funded by Generalitat Valenciana.}

\keywords{decomposition numbers, principal blocks, Sylow normalisers}

\subjclass[2010]{Primary 20C15, 20C20, 20D20, 20C33}

\dedicatory{To the memory of Gary Seitz}

\date{\today}

\maketitle

%\pagestyle{myheadings}
%\markboth{for personal use only}{preliminary}

%%%%%%%%%%%%%%%%%%%%%%%%%%%%%%%%%%%%%%%%%%%%%%%%%%%%%%%%%%%%%%%%%%%%%%%%%
\section{Introduction}

Let $G$ be a finite group, $p$ a prime number, $\Irr(G)$ the set of irreducible
ordinary characters of $G$ and $\IBr(G)$ the set of irreducible $p$-Brauer
characters of $G$. Then the restriction $\chi^\circ$ of any $\chi\in\Irr(G)$ to
the set of $p$-regular elements of $G$ can be written as
$$\chi^\circ=\sum_{\vhi\in\IBr(G)}d_{\chi\vhi}\vhi,$$
where the $d_{\chi\vhi}$ are uniquely determined non-negative integers. These are
called the $p$-modular \emph{decomposition numbers of $G$}, and a great deal of
literature is devoted to understand them.
\medskip

In \cite{NT,NT2} Navarro and Tiep initiated the investigation on relations
between $p$-decomposition numbers and properties of Sylow $p$-normalisers,
considering two different settings. In \cite{NT} they conjectured that if $p>3$
then $d_{\chi 1_G}\neq 0$ for all $\chi\in\Irr_{p'}(G)$, that is, for all
irreducible characters of $G$ of degree prime to $p$, if and only if $G$ has
self-normalising Sylow $p$-subgroups, and that this happens if and only if
$d_{\chi 1_G}=1$ for all $\chi\in\Irr_{p'}(G)$. Note that irreducible characters
of degree prime to $p$ lie in $p$-blocks of maximal defect, so this situation
can only happen when the principal block is the only Brauer $p$-block of
maximal defect of $G$. It is then natural to wonder what can be said when this
is not the case and one restricts attention to the principal $p$-block $B_0(G)$.
In this sense, in \cite{NT2} they conjectured that $d_{\chi 1_G}\neq 0$ for all
$\chi\in\Irr(B_0(G))$ if and only if $G$ has a normal $p$-complement. 
In this paper we consider the intersection of the two conditions in \cite{NT}
and \cite{NT2}, namely what happens if $d_{\chi 1_G}=1$ just for all $\chi\in\Irr_{p'}(B_0(G))$. Our main result is:

\begin{thmA}
 Let $G$ be a finite group and $p>3$ be a prime. Assume that all non-abelian
 simple composition factors of $G$ of order divisible by $p$ satisfy
 Property~$(*)$ below. The following are equivalent:
 \begin{enumerate}[\rm(i)]
  \item For every $\chi\in\Irr_{p'}(B_0(G))$ we have $d_{\chi 1_G}\neq 0$.
  \item For every $\chi\in\Irr_{p'}(B_0(G))$ we have $d_{\chi 1_G}=1$.
  \item For $P\in\Syl_{p}(G)$ we have $\norm G P=P\times K$ for some $K\le G$.
 \end{enumerate}
Moreover, if $G$ is $p$-solvable, this equivalence holds for every prime $p$.
\end{thmA}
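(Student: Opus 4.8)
We argue by induction on $|G|$. Since (ii) $\Rightarrow$ (i) is trivial, it suffices to prove (i) $\Rightarrow$ (iii) and (iii) $\Rightarrow$ (ii). Both implications are established by the same scheme: reduce $G$, along a chief series, to a configuration governed by the almost simple groups built on its non-abelian composition factors of order divisible by $p$, and then feed in Property~$(*)$. Note first that $\norm{G}{P}=P\times K$ for some $K\le G$ if and only if $\norm{G}{P}$ has a normal $p$-complement (two normal subgroups with trivial intersection centralise each other), which is the shape in which condition~(iii) will be used. In the $p$-solvable case every composition factor is cyclic of order $p$ or a $p'$-group, so no non-abelian simple composition factor of order divisible by $p$ occurs and Property~$(*)$ is vacuous; the same reduction then works for all primes $p$, the hypothesis $p>3$ entering only when Property~$(*)$ --- equivalently, the simple-group input of \cite{NT,NT2} --- has to be verified.

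\emph{First reductions.} Inflation from $\bar G=G/O_{p'}(G)$ gives a degree-preserving bijection $\Irr(B_0(\bar G))\to\Irr(B_0(G))$ under which $\IBr(B_0(\bar G))\to\IBr(B_0(G))$ and all decomposition numbers match; moreover $\norm{G}{P}$ has the form $P\times K$ if and only if $\norm{\bar G}{\overline P}$ does, because $\norm{G}{P}\cap O_{p'}(G)=\cent{O_{p'}(G)}{P}$ is a normal $p'$-subgroup of $\norm{G}{P}$ and having a normal $p$-complement passes both ways through a quotient by a normal $p'$-subgroup. Hence we may assume $O_{p'}(G)=1$, so that $F^*(G)$ is the product of a $p$-group and a (possibly trivial) direct product of non-abelian simple groups of order divisible by $p$. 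The configuration $O_p(G)\ne1$ cannot be eliminated by a quotient; it is treated separately, working with $B_0(G)$, $B_0(G/O_p(G))$ and the $G$-action on $O_p(G)$ directly.

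\emph{The chief-factor step.} Let $N$ be a minimal normal subgroup of $G$; by the previous paragraph we may assume $N$ is not a $p$-group, so $N=S_1\times\dots\times S_t$ with the $S_i$ isomorphic to a fixed non-abelian simple group $S$, and $O_{p'}(G)=1$ forces $p\mid|S|$, whence Property~$(*)$ applies to $S$. Now invoke the block theory of $B_0(G)$ over $N$: $B_0(G)$ covers $B_0(N)=B_0(S_1)\otimes\dots\otimes B_0(S_t)$, the trivial Brauer character $1_N$ is $G$-invariant, and each $\chi\in\Irr_{p'}(B_0(G))$ lies over a $G$-orbit in $\Irr_{p'}(B_0(N))$. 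Via the Fong--Reynolds and Clifford correspondences, together with the behaviour of the trivial Brauer character on passing to inertia subgroups, one relates $d_{\chi 1_G}$ to the analogous data for $N$ and for the relevant inertia quotient. Property~$(*)$ is exactly what is needed so that, on the $N$-level, the splitting $\norm{G}{P}=P\times K$ forces all these numbers to equal $1$, and conversely so that a failure of that splitting produces an explicit $\chi\in\Irr_{p'}(B_0(G))$ with $d_{\chi 1_G}=0$ --- the obstruction being visible either already inside $N$ together with the induced $P$-action (handled by Property~$(*)$ for $S$) or inside $G/N$, where the inductive hypothesis applies.

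\emph{Main difficulty.} The essential obstacle is that decomposition numbers, unlike character degrees, are not multiplicative along Clifford theory: knowing $d_{\theta 1_N}$ for $\theta\in\Irr(B_0(N))$ does not by itself pin down $d_{\chi 1_G}$ for $\chi\in\Irr(B_0(G))$ lying over $\theta$, and the two properties that must be transported --- membership in the principal block and having $p'$-degree --- interact non-trivially with the inertia subgroups and with the twisted group algebras appearing when $\theta$ fails to extend. Overcoming this calls for a choice of chief factor minimising the gap between $G$ and the almost simple groups on $S$, a careful analysis of the resulting character triples, and the separate $O_p(G)\ne1$ argument describing how $\norm{G}{P}$ and $B_0(G)$ see a normal $p$-subgroup. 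Arranging everything so that precisely the hypotheses of Property~$(*)$ are invoked is the crux; the remaining ingredients --- block covering, principal-block theory, and the Fong--Reynolds correspondence --- are standard.
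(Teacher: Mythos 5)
Your outline stops short of the actual content of the hard implication (i)$\Rightarrow$(iii): the ``chief-factor step'' is left as a declaration that Fong--Reynolds/Clifford theory ``relates'' $d_{\chi 1_G}$ to data for $N$ and an inertia quotient, and you yourself defer the arrangement of this step as ``the crux''. That is precisely the part that needs an argument, and the route you gesture at (character triples, twisted group algebras, inertia subgroups) is not how it is done and is never carried out. The paper's mechanism is concrete and different: for a $P$-invariant $\theta\in\Irr_{p'}(B_0(N))$ extending to $PN$, Murai's lemma (Lemma~\ref{lem:murai}) produces $\chi\in\Irr_{p'}(B_0(G))$ above $\theta$, so hypothesis~(i) forces $d_{\theta 1_N}\neq 0$ and then, via \cite[Lemma~2.3]{NT}, nonvanishing on each simple factor; combined with Lemma~\ref{lem:decquotientpgroup} and the steps showing $G=XP$ and $G=(H\cap X)P$ (which force $P$ to permute the components $S_i$ transitively), one analyses $SR$ with $R=\norm P S$: either $SR=G$, i.e.\ $G$ is almost simple with socle $S$ and $|G:S|$ a $p$-power, and Property~$(*)$ gives the contradiction, or $SR<G$ and induction together with \cite[Thm~1.1]{GMN} makes $S$ solvable, a contradiction. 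In particular your description of Property~$(*)$ as being needed ``in both directions'' is off: (iii)$\Rightarrow$(ii) uses no hypothesis on composition factors at all, only \cite[Thm~3.2]{NTV} and \cite[Thm~B]{NT}, and Property~$(*)$ enters only once, in the almost simple configuration just described.

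Two of your structural claims are also factually wrong. First, $p>3$ does not enter only through Property~$(*)$: the reduction relies repeatedly on \cite[Thm~1.1]{GMN} (a group with self-normalising Sylow $p$-subgroup, $p\geq 5$, is solvable), and (iii)$\Rightarrow$(ii) relies on \cite[Thm~B]{NT}; both restrict the prime independently of Property~$(*)$. Second, the case $\bO_p(G)\neq 1$ does not require the ``separate direct analysis'' you announce (and never supply), nor is it ``uneliminable by a quotient'': since $\Irr_{p'}(B_0(G/M))\subseteq\Irr_{p'}(B_0(G))$ for any normal $M$, hypothesis~(i) passes to every quotient, and the paper disposes of the largest $p$-solvable normal subgroup by induction plus \cite{NTV} and \cite{GMN}, reducing to the $p$-solvable theorem. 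Relatedly, your assertion that the $p$-solvable case for all primes follows from ``the same reduction'' is unsupported, because that reduction uses \cite{GMN} and hence $p\geq5$; the paper instead argues separately, reducing to $\bO_{p'}(G)=1$, where the principal block is the unique block of maximal defect, and quoting \cite[Thm~B]{NT}. As it stands the proposal has a genuine gap at the main implication and misidentifies where both $p>3$ and Property~$(*)$ are used.
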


Notice that, as pointed out in \cite{NT}, all irreducible characters of odd
degree of the alternating group $\fA_5$ contain the trivial character in their
2-modular reduction, and similarly, all irreducible characters of the Ree group
$^2G_2(27)$ of non-zero 3-defect contain the trivial character in their
3-modular reduction, while the respective Sylow $p$-normalisers have no normal
$p$-complement, so the equivalence in Theorem~A fails for non-solvable groups
with $p\leq 3$.

Theorem~A involves the following property that a finite non-abelian simple group
might, or might not, satisfy:

\begin{property}   \label{propB}
 Let $S$ be non-abelian simple and $p>3$ a prime dividing $|S|$. Then for
 all almost simple groups $H$ with socle $S$ and $|H:S|$ a $p$-power, there
 exists $\chi\in\Irr_{p'}(B_0(H))$ such that $d_{\chi 1_H}= 0$.
\end{property}

We prove Property~$(*)$ does hold for many simple groups: for sporadic groups,
alternating groups, and simple groups of Lie type in characteristic different
from~$p$. We also show it for some groups of Lie type in characteristic $p$.
(The general case of groups of Lie type in their defining characteristic was
also left open in \cite{NT} and \cite{NT2}.)
As the knowledge on $p$-decomposition numbers for groups of Lie type in their
own characteristic is too weak at present we refrain from making a general
conjecture and just leave it as a question as to whether Property~$(*)$ holds
for all non-abelian finite simple groups.
\medskip

\noindent
{\bf Structure of the paper.}
In Section~\ref{sec:simple} we prove Property~$(*)$ for $S$ a sporadic
simple group, an alternating group, a simple group of Lie type in characteristic
different from~$p$ as well as for some groups of Lie type in characteristic $p$.
In Section~\ref{sec:red} we show that Theorem~A holds for $p$-solvable
groups and in Theorem~\ref{thm:reduction} we reduce the general case to the
validity of Property~($*$) on composition factors, thus completing the proof
of Theorem~A.

\subsection*{Acknowledgements}
The second author would like to thank Gabriel Navarro for bringing this topic
to her attention.

%%%%%%%%%%%%%%%%%%%%%%%%%%%%%%%%%%%%%%%%%%%%%%%%%%%%%%%%%%%%%%%%%%%%%%%%%
\section{Almost simple groups}   \label{sec:simple}
In this section, we discuss instances of Property $(*)$ from the introduction.

%%%%%%%%%%%%%%%%%%%%%%%%%%%%%%%%%%%%%
\subsection{Alternating and sporadic groups}
We start out with simple groups not of Lie type.

\begin{prop}   \label{prop:spor}
 Property~$(*)$ holds for $S$ a sporadic simple group or the Tits group.
\end{prop}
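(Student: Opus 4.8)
The plan is to first reduce to the case $H=S$. Since $S$ is a sporadic simple group or the Tits group, $\mathrm{Out}(S)$ has order at most~$2$; as $p>3$, the hypothesis that $|H:S|$ is a $p$-power forces $|H:S|=1$, i.e.\ $H=S$. So it suffices to show that for every prime $p>3$ dividing $|S|$ there is some $\chi\in\Irr_{p'}(B_0(S))$ with $d_{\chi1_S}=0$. Throughout I would use the elementary observation that if $\chi\in\Irr_{p'}(B_0(S))$ is non-trivial and $\chi^\circ\in\IBr(S)$, then $\chi^\circ\ne1_S^\circ$ and hence $d_{\chi1_S}=0$; more generally, any $\chi\in\Irr_{p'}(B_0(S))$ whose reduction $\chi^\circ$ does not involve the trivial Brauer character has $d_{\chi1_S}=0$.

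Next I would split the primes $p>3$ dividing $|S|$ into two families. Suppose first that a Sylow $p$-subgroup $P$ of $S$ is cyclic. Then $B_0(S)$ has cyclic defect group $P$, so all of its irreducible characters have height zero, i.e.\ $\Irr(B_0(S))=\Irr_{p'}(B_0(S))$, and its decomposition matrix is encoded by the Brauer tree. As $1_S^\circ$ is irreducible, $1_S$ is a leaf of that tree; writing $e_0$ for its unique incident edge, one has $d_{\chi1_S}\ne0$ only for $\chi$ an endpoint of $e_0$ — that is, for $1_S$ and for the single orbit of character(s) at the other end of $e_0$. Since $S$ is simple non-abelian with $p\mid|S|$, it has no normal $p$-complement, so $B_0(S)$ is not nilpotent and its Brauer tree has at least two edges, hence at least three vertices; therefore there is some vertex $\chi\ne1_S$ not incident to $e_0$, and this $\chi\in\Irr_{p'}(B_0(S))$ satisfies $d_{\chi1_S}=0$. (Concretely: take any leaf of the tree different from $1_S$ and from the exceptional vertex; no knowledge of the specific trees is needed, so this disposes at one stroke of every pair $(S,p)$ with cyclic Sylow $p$-subgroup, in particular of every pair with $p\,\|\,|S|$.)

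There remain only the finitely many pairs $(S,p)$ with $p>3$ and $p^2\mid|S|$: explicitly $p=5$ for the Tits group, $J_2$, $HS$, $McL$, $Suz$, $Ru$, $Co_1$, $Co_2$, $Co_3$, $Fi_{22}$, $Fi_{23}$, $Fi_{24}'$, $He$, $HN$, $Th$, $Ly$, $B$, $M$; $p=7$ for $He$, $O'N$, $Th$, $Co_1$, $Fi_{24}'$, $B$, $M$; $p=11$ for $J_4$ and $M$; and $p=13$ for $M$. For each of these I would simply read off a suitable $\chi$ from the decomposition matrix of $B_0(S)$ — equivalently, from the ordinary character of the projective cover of the trivial module — which is available through the Atlas of Brauer characters and the GAP character table library, supplemented by the literature devoted to the largest groups; in most cases one can even exhibit a non-trivial $\chi\in\Irr_{p'}(B_0(S))$ with $\chi^\circ\in\IBr(S)$. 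Where the full decomposition matrix is not yet on record (this may still be the case for the largest groups at the small primes $p=5,7$, and for the Monster also at $p=11,13$), the much weaker statement needed here can still be extracted from known partial data together with the inequality
\[
 d_{\chi1_S}\ \le\ \bigl\langle 1_P,\ \mathrm{Res}_P\chi\bigr\rangle ,
\]
valid because the projective cover of the trivial $kS$-module is a direct summand of the permutation module $k[S/P]$; thus any $\chi\in\Irr_{p'}(B_0(S))$ which is fixed-point free on restriction to some subgroup of $P$ already has $d_{\chi1_S}=0$, and such $\chi$ is readily located from the (completely known) ordinary character table of $S$.

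The only genuinely delicate step is the last one for the very largest sporadic groups at $p\in\{5,7\}$ (and the Monster also at $p\in\{11,13\}$), where one must either locate the relevant block of the (partly computer-assisted) decomposition matrix or produce by hand a single $p'$-degree character in the principal block whose $p$-modular reduction manifestly avoids the trivial constituent. Everything else is either formal (the reduction to $H=S$) or an immediate consequence of the theory of blocks with cyclic defect group.
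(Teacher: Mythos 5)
Your reduction to $H=S$ is correct, and your cyclic-Sylow argument via the Brauer tree is valid and in fact more self-contained than the paper's treatment of that situation (the paper handles all abelian Sylow cases at once by quoting \cite[Prop.~3.2]{NT2} together with the Kessar--Malle direction of Brauer's height zero conjecture \cite{KM13} to get the $p'$-degree). The genuine problem lies in how you dispose of the remaining pairs $(S,p)$ with $p^2\mid |S|$. For the cases where the decomposition matrix of $B_0(S)$ is not known -- precisely the cases you flag yourself, such as $B$ and $M$ at $p=5,7$ and $M$ at $p=11,13$ -- your fallback tool is the inequality $d_{\chi 1_S}\le\langle 1_P,\chi|_P\rangle$, justified by the claim that the projective cover of the trivial $kS$-module is a direct summand of $k[S/P]$. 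That claim is false for $P\ne 1$: the unique indecomposable summand of $k[S/P]$ having the trivial module as a quotient is the Scott module, which has vertex $P$ and is therefore not projective. The inequality itself is also false: for $S=\fA_5$, $p=5$ and $\chi$ the character of degree $4$ in $B_0(S)$ one has $d_{\chi 1_S}=1$ while $\langle\chi|_P,1_P\rangle=(4-4)/5=0$. So the only device you offer for the hardest groups proves nothing, and ``reading off the decomposition matrix'' does not cover them either, as you concede.

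Thus the proof is incomplete exactly where completeness is most delicate. For comparison, the paper avoids decomposition matrices of the large sporadic groups altogether: when the Sylow $p$-subgroup is non-abelian it checks via \cite{GAP} that $S$ has a unique $p$-block of maximal defect, so that $\Irr_{p'}(S)=\Irr_{p'}(B_0(S))$, and then invokes \cite[Lemma~3.2]{NT}, which supplies some $\chi\in\Irr_{p'}(S)$ with $d_{\chi 1_S}=0$ without requiring knowledge of the full decomposition matrix; the abelian (not just cyclic) Sylow cases are covered by \cite[Prop.~3.2]{NT2} plus \cite{KM13}. To repair your argument you would need either to import those results (or an equivalent correct criterion) for the pairs with $p^2\mid|S|$, or to replace the false projectivity claim by an actual construction of a suitable character in each of the finitely many outstanding cases.
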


\begin{proof}
Let $S$ be as in the assumption and $H\ge S$ as in Property~$(*)$. Since $p>2$
this means $H=S$. If $S$ has abelian
Sylow $p$-subgroups, by \cite[Prop.~3.2]{NT2} there exists $\chi\in\Irr(B_0(H))$
such that $d_{\chi 1_H}=0$. By one direction of Brauer's height zero conjecture
\cite{KM13}, we have $\chi\in\Irr_{p'}(B_0(H))$ as required. Else, by
inspection in \cite{GAP}, $S$ has just one $p$-block of maximal defect and then
by \cite[Lemma~3.2]{NT}, there is $\chi\in\Irr_{p'}(H)=\Irr_{p'}(B_0(H))$ with
$d_{\chi 1_H}=0$.
\end{proof}

\begin{prop}   \label{prop:alt}
 Property~$(*)$ holds for $S$ an alternating group.
\end{prop}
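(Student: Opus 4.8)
**Proof proposal for Proposition 2.4 (Property $(*)$ for $S$ an alternating group).**

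The plan is to treat $S = \mathfrak{A}_n$ for $n \geq 5$ and a prime $p > 3$ dividing $n!$. Since $p > 2$, any almost simple $H$ with socle $S$ and $|H:S|$ a $p$-power must equal $S$ (as $\mathrm{Out}(\mathfrak{A}_n)$ is a $2$-group for $n \neq 6$, and for $n=6$ it has order $4$), so it suffices to produce $\chi \in \Irr_{p'}(B_0(\mathfrak{A}_n))$ with $d_{\chi 1_S} = 0$. First I would split into cases according to whether the Sylow $p$-subgroups of $\mathfrak{A}_n$ are abelian. If they are abelian --- equivalently $n < p^2$, so $\lfloor n/p \rfloor < p$ --- then by \cite[Prop.~3.2]{NT2} there is $\chi \in \Irr(B_0(\mathfrak{A}_n))$ with $d_{\chi 1_S} = 0$, and one direction of the Brauer height zero conjecture (now a theorem, \cite{KM13}) forces $\chi \in \Irr_{p'}(B_0(\mathfrak{A}_n))$, exactly as in the proof of Proposition~\ref{prop:spor}.

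The substantive case is when Sylow $p$-subgroups are non-abelian, i.e.\ $n \geq p^2$. Here I would work via the symmetric group and the combinatorics of cores and quotients. Recall that $\Irr(\mathfrak{S}_n)$ is labelled by partitions of $n$, the principal $p$-block consists of those $\lambda$ with empty $p$-core (when $p \mid n$; more precisely the block is determined by the $p$-core), and the $p'$-degree characters in a block of weight $w$ correspond to partitions whose $p$-quotient is a $p$-core tuple --- via the hook-length / Macdonald degree formula, $\chi^\lambda$ has $p'$-degree iff the $p$-quotient of $\lambda$ consists of $p$-core partitions. The trivial character $1_{\mathfrak{S}_n} = \chi^{(n)}$ lies in $B_0$. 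The decomposition number $d_{\chi^\lambda\, 1}$ of $\mathfrak{S}_n$ counts the multiplicity of the trivial Brauer character in $(\chi^\lambda)^\circ$; by a theorem of James (or by the structure of Specht modules), $d_{\chi^\lambda\, 1_{\mathfrak{S}_n}} \neq 0$ forces $\lambda$ to be "$p$-close to" the one-row partition --- concretely, James's result that the only composition factor $D^\mu$ of $S^\lambda$ with $\mu$ $p$-regular being $D^{(n)}$ the trivial module imposes that $\lambda$ dominates... Rather than invoke the full strength, I would exhibit an \emph{explicit} partition: choose $\lambda$ to have the same $p$-core as $(n)$ (the $p$-core of $(n)$, call it $\kappa$, with $|\kappa| = n - pw$) but a $p$-quotient that is a tuple of nonempty $p$-core partitions chosen so that $\lambda \neq (n)$ and $\lambda$ is not obtained from $(n)$ by raising a single $p$-hook --- e.g.\ take the $p$-quotient to be $((1^p)\text{-type or }(w),\varnothing,\dots)$ arranged to make $\lambda$ a two- or three-row partition far from $(n)$ in dominance order. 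Then $\chi^\lambda \in \Irr_{p'}(B_0(\mathfrak{S}_n))$, and I claim $d_{\chi^\lambda\, 1} = 0$ because $S^\lambda$ has no trivial composition factor: the trivial module $D^{(n)}$ appears in $S^\lambda$ only if $\lambda \trianglerighteq (n)$ in the appropriate (dual) sense, which fails for our choice.

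Finally I would pass from $\mathfrak{S}_n$ down to $\mathfrak{A}_n$. The restriction $\chi^\lambda|_{\mathfrak{A}_n}$ is either irreducible (if $\lambda \neq \lambda'$) or splits into two constituents $\chi^\lambda_{\pm}$ of equal degree (if $\lambda = \lambda'$); in either case the resulting constituent(s) lie in $B_0(\mathfrak{A}_n)$ (the principal block restricts to the principal block) and have $p'$-degree. Since $1_{\mathfrak{A}_n}$ is the restriction of $1_{\mathfrak{S}_n}$ and Brauer characters of $\mathfrak{A}_n$ are obtained by restriction (with a parallel splitting behaviour controlled by whether $D^\mu$ is fixed by the graph automorphism), the vanishing $d_{\chi^\lambda\, 1_{\mathfrak{S}_n}} = 0$ yields $d_{\psi\, 1_{\mathfrak{A}_n}} = 0$ for each constituent $\psi$ of $\chi^\lambda|_{\mathfrak{A}_n}$ --- one just checks the trivial Brauer character does not occur in $(\chi^\lambda|_{\mathfrak{A}_n})^\circ = (\chi^\lambda)^\circ|_{\mathfrak{A}_n}$. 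The main obstacle I anticipate is making the choice of $\lambda$ genuinely work for \emph{all} $n \geq p^2$ and $p > 3$ simultaneously, i.e.\ exhibiting a uniform family of partitions with empty-$p$-core-matching, $p'$-degree, self-conjugate-or-not behaviour under control, and provably no trivial composition factor in the Specht module; this requires a clean citation to James's description of $\mathrm{Hom}_{\mathfrak{S}_n}(S^\lambda, -)$ or of decomposition matrices, or else an induction on $n$ peeling off $p$-hooks. A secondary technical point is the $\mathfrak{A}_n$-splitting bookkeeping, which is routine but must be stated carefully so that at least one $p'$-degree constituent of the principal block genuinely has decomposition number zero against $1_{\mathfrak{A}_n}$.
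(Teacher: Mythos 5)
Your overall architecture (find a suitable $\chi^\lambda\in\Irr_{p'}(B_0(\mathfrak{S}_n))$ with no trivial Brauer constituent and descend to $\mathfrak{A}_n$) is the same as the paper's, and the abelian-Sylow shortcut via \cite[Prop.~3.2]{NT2} and \cite{KM13} is legitimate. But the heart of your argument --- the non-abelian Sylow case --- has a genuine gap: your justification of $d_{\chi^\lambda 1}=0$ is not a theorem. The standard dominance statement says that a composition factor $D^\mu$ of the Specht module $S^\lambda$ satisfies $\mu\trianglerighteq\lambda$; since $(n)$ is the \emph{maximum} of the dominance order, this places no restriction whatsoever on when $D^{(n)}$ occurs in $S^\lambda$, and there is no ``dual'' version that does what you want. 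Whether $[S^\lambda:D^{(n)}]\neq 0$ is governed by a nontrivial congruence criterion of James on the parts of $\lambda$, and being ``far from $(n)$ in dominance order'' does not prevent a trivial composition factor. So your proposed $\lambda$ is not shown to work, and you yourself flag that making a uniform choice for all $n\ge p^2$ is the open point --- but that is precisely the content of the proposition. Two further inaccuracies compound this: (1) having $p'$-degree is \emph{not} equivalent to the $p$-quotient of $\lambda$ being a tuple of $p$-cores once $n\ge p^2$; by Macdonald's theorem one needs the full $p$-core tower condition, so even membership in $\Irr_{p'}$ is not secured by your construction; (2) in the descent to $\mathfrak{A}_n$ it does not suffice that $1_{\mathfrak{S}_n}^\circ$ is absent from $(\chi^\lambda)^\circ$: since $p$ is odd, the sign representation is a nontrivial linear Brauer character whose restriction to $\mathfrak{A}_n$ is trivial, so one must also rule out the sign Brauer constituent, which your argument never controls.

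The paper sidesteps all of this with explicit partitions whose full mod-$p$ behaviour is known: for $p\mid n$ it takes $\chi^{(n-2,1^2)}$, whose reduction is the sum of two Brauer characters of degrees $n-2$ and $(n-2)(n-3)/2$ (so neither the trivial nor the sign Brauer character occurs, and the degree is prime to $p$); for $p\nmid n$ with $a_0>1$ it takes the hook $\chi^{(a_0,1^{n-a_0})}$, irreducible modulo $p$ by Peel's theorem, of degree $\binom{n-1}{n-a_0}\equiv 1\pmod p$ by Lucas, and in $B_0$ by the Nakayama conjecture; and for $a_0=1$ it takes $\chi^{(n-3,2,1)}$ from \cite[Lemma~3.1(ii)]{NT}. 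If you want to pursue your route, you would need to replace the dominance heuristic by James's criterion for $[S^\lambda:D^{(n)}]$ (and its Mullineux twist for the sign module) together with the $p$-core tower characterisation of $p'$-degrees --- at which point you are essentially rebuilding the explicit constructions the paper cites.
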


\begin{proof}
Let $S=\fA_n$ with $n\ge5$. As $p>2$ again we have $H=S$. Set $G:=\fS_n$.
Recall that the irreducible characters of $\fS_n$ are naturally labelled by
partitions of~$n$. If $p$ divides~$n$, then let
$\chi=\chi^{(n-2,1^2)}\in\Irr(B_0(G))$, as in \cite[Prop.~3.1]{NT2}.
Then $\chi^\circ$ is the sum of two irreducible Brauer characters of degrees
$n-2$ and $(n-2)(n-3)/2$ and hence it is of degree prime to~$p$ and does not
contain any irreducible Brauer character of degree 1. Now, any $\theta\in\Irr(S)$
under $\chi$ lies in $\Irr_{p'}(B_0(S))$ and $d_{\theta 1_S}=0$, as wanted. 

So we may assume that $p$ does not divide $n$. Let $n=a_kp^k+\cdots+a_1p+a_0$
be the $p$-adic expansion of $n$. Since $p$ does not divide $n$, we have
$a_0>0$. Suppose first that $a_0>1$. Consider $\chi=\chi^{(a_0,1^{n-a_0})}$.
By Peel's theorem  (see \cite[Thm~24.1]{Ja78}), $\chi^\circ\in\IBr(G)$. Hence,
it is enough to show that $\chi(1)\neq 1$, $\chi(1)$ is $p'$, and
$\chi\in\Irr(B_0(G))$. By the hook length formula, $\chi$ has degree
$$\chi(1)=\frac{n!}{n\cdot (a_0-1)! (n-a_0)!}=\binom{n-1}{n-a_0}.$$
Then $\chi(1)\neq 1$ since $1<a_0<n$. Moreover
$$n-1=a_kp^k+\cdots + a_1p+(a_0-1)$$ and
$$n-a_0=a_kp^k+\cdots + a_1p,$$ so by Lucas' theorem we have
$$\chi(1)=\binom{n-1}{n-a_0}\equiv \prod_{i=1}^k\binom{a_i}{a_i}\binom{a_0-1}{0}
  \equiv 1\pmod p.$$
Thus, $\chi(1)$ is not divisible by $p$. Finally since the $p$-core of
$\lambda=(a_0,1^{n-a_0})$ is $(a_0)$, we have that $\chi$ lies in the principal
$p$-block by the Nakayama conjecture, as desired. Now take $\theta\in\Irr(S)$
under $\chi$, so $\theta\in\Irr_{p'}(B_0(S))$ and $d_{\theta 1_S}=0$.
 
Finally, suppose that $a_0=1$, so $p$ divides $n-1$. In this case consider
$\chi=\chi^{(n-3,2,1)}$, so $\chi$ lies in $B_0(G)$. This is the character in
the proof of \cite[Lemma 3.1(ii)]{NT}, so we are done in this case as well.
\end{proof}

%%%%%%%%%%%%%%%%%%%%%%%%%%%%%%%%%%%%%
\subsection{Groups of Lie type in non-defining characteristic}
For groups of Lie type in cross characteristic we consider the following set-up.
Let $\bG$ be a simple linear algebraic group of adjoint type over an
algebraically closed field of characteristic~$r$ and $F:\bG\to\bG$ a Steinberg
map, with group of fixed points $G:=\bG^F$. It is well-know that any simple
group of Lie type can be obtained as $S=[G,G]$ for $\bG,F$ chosen suitably.
Moreover, if $\bG_\SC$ denotes a simply connected covering of $\bG$, with
corresponding Steinberg map also denoted $F$, then
$S\cong\bG_\SC^F/\bZ(\bG_\SC^F)$, if $S$ is not the Tits simple group, which was
already discussed in Proposition~\ref{prop:spor}. Let $(\bG^*,F)$ be dual to
$(\bG,F)$ and $G^*:=\bG^{*F}$.

We let $\bB\le\bG$ denote an $F$-stable Borel subgroup of $\bG$, and set
$B:=\bB^F$.

We recall that automorphisms of prime order $p\ge5$ of simple groups of Lie
type are either field automorphisms, or diagonal automorphisms for groups
of types $\PSL_n(\eps q)$, with $\eps\in\{\pm1\}$.

\begin{prop}   \label{prop:big primes}
 Property~$(*)$ holds for $S$ as above if $|B|$ is prime to $p$.
\end{prop}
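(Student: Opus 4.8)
The plan is to use the assumption that $|B|$ is prime to $p$ to pin down the $p$-local structure tightly. Since $B = \bB^F$ is a Borel subgroup of $G = \bG^F$, its order is divisible by $|U|$ where $U = \bR_u(\bB)^F$ is a Sylow $r$-subgroup of $G$, and by $|T|$ for a maximally split torus $T$. Thus $p \nmid |B|$ forces $p$ not to divide the order of any maximally split torus, which means $p$ does not divide the generic polynomial order at the "$q-1$" (untwisted) factor; combined with the standard structure of $G$, the condition $p \mid |S|$ together with $p \nmid |B|$ means $p$ divides $|G|$ only through a cyclotomic factor $\Phi_d(q)$ with $d > 1$, so the Sylow $p$-subgroups of $S$ are abelian (indeed, for $p > 3$ they are cyclic or come from a torus normaliser with small Weyl group part). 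I would make this precise and conclude that $S$, hence $H = S$ (as $p > 3$ forces $|H:S|$ to be prime to $p$ unless $S = \PSL_n(\eps q)$, but a diagonal automorphism of $p$-power order would require $p \mid \gcd(n, q-\eps)$, which divides $|B|$ — contradiction), has abelian Sylow $p$-subgroups.

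Next, with $H = S$ having abelian Sylow $p$-subgroups, I would invoke \cite[Prop.~3.2]{NT2}, which guarantees the existence of some $\chi \in \Irr(B_0(H))$ with $d_{\chi 1_H} = 0$ whenever the Sylow $p$-subgroup is abelian. Then one direction of Brauer's height zero conjecture (now a theorem, \cite{KM13}) says every character in a block with abelian defect group has height zero, i.e.\ $\chi \in \Irr_{p'}(B_0(H))$. This exhibits the required character and establishes Property~$(*)$ for such $S$.

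The main obstacle is the first step: verifying cleanly that $p \nmid |B|$ together with $p \mid |S|$ really does force abelian (and, for the diagonal-automorphism bookkeeping, that no almost simple $H$ with $|H:S|$ a nontrivial $p$-power arises). One must handle all Steinberg maps uniformly — split, twisted, and the very twisted Suzuki and Ree cases — and recall that $|G| = |U| \cdot |T| \cdot [W$-polynomial in $q]$, so the prime-to-$p$ condition on $|B| = |U|\cdot|T|$ kills the $r$-part and the "$d = 1$" torus part, leaving $p$ inside some $\Phi_d(q)$ with $d \geq 2$; one then quotes the structure theory of Sylow $d$-tori (e.g.\ via generic Sylow theory) to see the Sylow $p$-subgroup of $G$ is abelian for $p$ not dividing the order of the relative Weyl group of such a torus, which holds whenever $p$ exceeds the relevant Weyl group primes — comfortably true for $p > 3$ outside a short explicit list that can be checked directly. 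Once this structural dichotomy is in place, the rest is a short appeal to \cite{NT2} and \cite{KM13}.
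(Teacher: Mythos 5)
There is a genuine gap, in fact two. The central structural claim of your first step --- that $p\nmid|B|$ together with $p>3$ forces the Sylow $p$-subgroups of $S$ to be abelian, ``outside a short explicit list'' --- is false. Take $S=\PSL_n(q)$ with $p\mid q+1$ and $n\ge 2p$: then $p$ does not divide $q(q-1)$, so $p\nmid|B|$, but the Sylow $p$-subgroup is non-abelian, because the relative Weyl group of a Sylow $\Phi_2$-torus has order divisible by $\lfloor n/2\rfloor!$ and hence by $p$. The same happens for every cyclotomic polynomial $\Phi_d$ with $d\ge2$ once the rank is large compared to $p$, so for each fixed $p>3$ you get infinite families of counterexamples, not a finite list that can be checked directly. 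Consequently the whole strategy of reducing to abelian Sylow subgroups and quoting \cite[Prop.~3.2]{NT2} together with \cite{KM13} (which is exactly how the paper treats the sporadic groups) simply does not cover the groups this proposition is responsible for. Secondly, your bookkeeping of the almost simple groups $H$ is wrong: you only exclude diagonal automorphisms, but field automorphisms of $p$-power order exist for every type whenever $p$ divides $f$ (where $q=r^f$), so one cannot conclude $H=S$; Property~$(*)$ explicitly demands a character of such an extension $H=S\langle\ga\rangle$, and this case needs a genuine argument.

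For comparison, the paper's proof does not go through the Sylow structure at all. It takes a $p$-element $s\ne1$ in the centre of a Sylow $p$-subgroup of the dual group $G^*$ and the associated semisimple character $\chi\in\cE(G,s)$: its degree is prime to $p$ by the Jordan decomposition degree formula, it lies in $B_0(G)$ by Hiss, and $d_{\chi 1_G}=0$ because $p\nmid|B|$ makes the permutation module $1_B^G$ projective, so it contains the projective cover of the trivial module, all of whose ordinary constituents are unipotent --- Brauer reciprocity then excludes $1_G^\circ$ from $\chi^\circ$. The field-automorphism case is handled by choosing $s$ to be $\ga^*$-invariant, using the equivariance of Jordan decomposition to get a $\ga$-invariant $\chi$, descending to a $\ga$-invariant $\theta\in\Irr_{p'}(B_0(S))$ and extending it to $H$ inside $B_0(H)$. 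Your proposal would need to be replaced by an argument of this kind (or some other device valid for non-abelian Sylow subgroups and for field-automorphism extensions) to prove the statement.
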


\begin{proof}
By assumption $p$ divides $|S|$, hence also $|G|=|G^*|$. Let $1\ne s\in G^*$ be
a (semisimple) $p$-element in the centre of a Sylow $p$-subgroup of $G^*$, and
let $\chi\in\Irr(G)$ be the semisimple character in the Lusztig series
$\cE(G,s)$, unique since $\bG$ has connected centre, see
\cite[Def.~2.6.9]{GM20}. By the degree formula for Jordan
decomposition \cite[Cor.~2.6.6]{GM20}, $\chi(1)$ is then prime to $p$, and also
$\chi(1)>1$ as $p$ does not divide $|\bZ(G^*)|$ and so $C_{G^*}(s)<G^*$.
Furthermore, by \cite[Cor.~3.4]{Hi88}, the semisimple character $\chi$ lies in
the same $p$-block of~$G$ as the semisimple character in $\cE(G,1)$, i.e., the
trivial character, so in the principal $p$-block. Since $p$ does not
divide $|B|$, the permutation module $1_B^G$ is projective, and thus
contains the projective cover of the trivial module. But all constituents
of $1_B^G$ are unipotent, so lie in $\cE(G,1)$ (see \cite[Exmp.~3.2.6]{GM20}).
Hence, by Brauer reciprocity, $1_G^\circ$ does not occur as a constituent
of~$\chi^\circ$. Taking for $\theta$ any character of $S$ below $\chi$ we see
that $\theta\in\Irr_{p'}(B_0(S))$ and $d_{\theta 1_S}=0$.

Next observe that the order of any outer diagonal automorphism of $S$ divides
the order of $B$. Thus, $H$ is an extension of $S$ by a $p$-power order field
automorphism $\ga$. Note that any such automorphism of $S$ extends to $G$
and then also induces a dual automorphism on $G^*$, which we denote $\ga^*$
(see e.g. \cite[5.6]{Tay}). Now choose $s$ more precisely to be also
$\ga^*$-invariant (which is possible as $\ga^*$ is a $p$-element, so necessarily
has non-trivial fixed points on the centre of a $\gamma^*$-stable Sylow
$p$-subgroup of $G^*$). Then $\chi$
is also $\ga$-invariant by \cite[Prop.~7.2]{Tay}. Since the index $|G:S|$ is a
divisor of $|B|$, hence prime to~$p$, there then exists a $\ga$-invariant
$\theta\in\Irr_{p'}(B_0(S))$ below $\chi$, with $d_{\theta 1_S}=0$. Let
$\tilde\theta$ be an extension of $\theta$ to $H=S\langle\ga\rangle$ in
$B_0(H)$. Then $\tilde\theta$ is as desired.
\end{proof}

\begin{prop}   \label{prop:q-1}
 Property~$(*)$ holds for $S$ as above if $F$ is a Frobenius
 map with respect to an $\FF_q$-structure and $p$ divides $q-1$.
\end{prop}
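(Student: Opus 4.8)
The plan is to mimic the strategy of Proposition~\ref{prop:big primes}, producing a semisimple character $\chi\in\Irr_{p'}(B_0(G))$ whose Brauer reduction does not involve the trivial Brauer character, but this time handling the case $|B|$ \emph{divisible} by $p$ (since $p\mid q-1$). First I would set up the dual group picture: since $p\mid q-1$, the maximal torus $\bT_0^*$ corresponding to an $F$-stable maximally split torus in $\bG^*$ has order divisible by a large power of $p$, and I would pick $1\ne s\in\bT_0^*$ a semisimple $p$-element that is ``regular enough''. The key properties to arrange are: (a) $C_{\bG^*}(s)$ is a proper subgroup, so that the semisimple character $\chi\in\cE(G,s)$ has degree $>1$; (b) $\chi(1)$ is prime to $p$ — this follows from the Jordan decomposition degree formula \cite[Cor.~2.6.6]{GM20} provided $C_{\bG^*}(s)^F$ contains a full Sylow $p$-subgroup of $G^*$, which can be guaranteed by choosing $s$ inside a fixed Sylow $p$-subgroup of $\bT_0^{*F}$ and using that $C_{\bG^*}(s)\supseteq\bT_0^*$; (c) $\chi\in B_0(G)$, which again follows from \cite[Cor.~3.4]{Hi88} since $s$ is a $p$-element, so $\cE(G,s)$ meets the principal block.

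The new difficulty compared to Proposition~\ref{prop:big primes} is that the permutation module $1_B^G$ is no longer projective when $p\mid|B|$, so the clean argument via Brauer reciprocity breaks down. Instead I would argue directly that $d_{\chi 1_G}=0$: the trivial module $k$ and the simple module $D$ corresponding to $\chi^\circ$ (or, more carefully, the relevant constituents of $\chi^\circ$) lie in different Harish-Chandra series, or one uses that $\chi$ lies in a non-principal Lusztig series $\cE(G,s)$ with $s\ne1$. The cleanest route is: by the theory of Brauer characters and blocks for finite reductive groups, the decomposition matrix has a block-triangular shape with respect to the partition of $\Irr(B_0(G))$ into Lusztig series $\cE(G,s)$ for $s$ a $p$-element; and since $\chi$ is the semisimple character of $\cE(G,s)$ with $s\ne1$, its Brauer reduction $\chi^\circ$ is a sum of Brauer characters ``attached'' to $\cE(G,s)$, none of which is the trivial one (which is attached to $\cE(G,1)$). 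Concretely I expect to invoke that the projective indecomposable covering the trivial module has all its ordinary constituents unipotent (lying in $\cE(G,1)$) — this is a standard fact since $1_G^\circ=1_G$ is unipotent and projective covers respect the union of Lusztig series forming a block — and then Brauer reciprocity $d_{\chi 1_G}=\langle\Psi_{1_G},\chi\rangle$ vanishes because $\chi\notin\cE(G,1)$.

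Once $d_{\chi 1_G}=0$ is established for $H=S$ — after passing from $\chi\in\Irr(G)$ to a constituent $\theta\in\Irr(S)$ below it, noting as before that $|G:S|$ involves only diagonal automorphisms whose order divides $|\bZ(\bG^*_\SC)^F|$, and that $\theta\in\Irr_{p'}(B_0(S))$ — I would handle the almost simple extensions exactly as in Proposition~\ref{prop:big primes}: a $p$-power order outer automorphism of $S$ is either diagonal or a field automorphism, and in either case I would choose $s$ to be additionally stable under the corresponding dual automorphism (possible since that automorphism is a $p$-element and must fix a nontrivial point of the centre of a stable Sylow $p$-subgroup of $G^*$), making $\chi$ and hence some $\theta$ below it invariant by \cite[Prop.~7.2]{Tay}, then extend $\theta$ to $H$ inside $B_0(H)$. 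The main obstacle I anticipate is step (b)/(c) combined — pinning down $s$ so that simultaneously $C_{\bG^*}(s)$ carries a full Sylow $p$-subgroup of $G^*$ (for the $p'$-degree), is proper (for $\chi(1)>1$), and is compatible with the prescribed dual automorphism; this is a finite case-check on the possible centraliser types, but since $p>3$ and $p\mid q-1$ there is plenty of room inside the split torus, so no exceptional configurations should arise.
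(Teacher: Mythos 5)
Your construction breaks down at the one point where it had to go beyond Proposition~\ref{prop:big primes}: the claim that $d_{\chi 1_G}=0$ for the semisimple character $\chi\in\cE(G,s)$ with $s\ne1$ a $p$-element. The ``standard fact'' you invoke --- that every ordinary constituent of the projective cover of the trivial module is unipotent --- is simply false once $p\mid q-1$; in Proposition~\ref{prop:big primes} that statement was not standard either, it was \emph{deduced} from the projectivity of $1_B^G$, which is exactly what you lose here. Likewise there is no block-triangularity of the decomposition matrix with respect to the partition of $\Irr(B_0(G))$ into series $\cE(G,t)$, $t$ a $p$-element; the true triangularity (Geck--Hiss basic sets, Brou\'e--Michel) goes the other way: the unipotent characters form a basic set for the principal block, and the non-unipotent ordinary characters in $\cE(G,t)$ with $t\ne 1$ a $p$-element then decompose with, in general, nonzero coefficient at the Brauer character of the trivial module. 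Concretely, take $G=\operatorname{PGL}_2(q)$ with $p\mid q-1$, so $G^*=\SL_2(q)$ and $s\ne1$ a $p$-element of the split torus. The semisimple character of $\cE(G,s)$ is the irreducible principal series character of degree $q+1$ attached to a linear character $\hat s$ of $p$-power order of the split torus; since $\hat s$ is trivial on $p$-regular elements, its restriction to $p$-regular classes coincides with that of $1_G+\St$, so $d_{\chi 1_G}=1$. (This is also visible on the Brauer tree $1_G$ --- exceptional --- $\St$ of the principal block.) So your candidate character not only is not proved to work, it genuinely fails; semisimple characters attached to nontrivial $p$-elements are the wrong objects when $p\mid q-1$. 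A secondary, fixable, inaccuracy: choosing $s$ in a Sylow $p$-subgroup of the maximally split torus does not guarantee that $C_{G^*}(s)$ contains a Sylow $p$-subgroup of $G^*$, since for $p\mid q-1$ the Sylow $p$-subgroups are in general not contained in maximal tori; one would need $s$ central in a Sylow $p$-subgroup of $G^*$.

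For comparison, the paper sidesteps Lusztig series entirely in this case: since $p\mid q-1$, by Dipper's theorem the $p$-decomposition matrix of the Weyl group $W$ embeds into that of $G$, with the sign character $\eps$ of $W$ corresponding to the Steinberg character; as $p>2$ one has $d_{\eps 1_W}=0$, hence $d_{\St 1_G}=0$, and $\St$ has $p'$-degree and lies in $B_0(G)$. The equivariance part of your argument (passing to $S$ and to almost simple $H$ of $p$-power index) would then go through essentially as you describe, since $\St$ is invariant under all automorphisms and restricts irreducibly to $S$.
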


\begin{proof}
Let $W$ be the Weyl group of $G$, that is, the $F$-fixed points of the Weyl
group of~$\bG$. As $p$ divides $q-1$, the $p$-decomposition matrix of the group
algebra of $W$ embeds into the $p$-modular decomposition matrix of $G$ (see
\cite[4.10]{Di90}). Let $\eps\in\Irr_{p'}(W)$ be the (linear) sign character.
Then $\eps^\circ\ne 1_W^\circ$ since $p>2$, whence $d_{\eps1_W}=0$. Now $\eps$
corresponds to the Steinberg character $\St$ of~$G$.
Then $d_{\St 1_G}=d_{\eps1_W}=0$, $\St$ lies in the principal $p$-block of $G$
(e.g. by \cite[Thm~A]{En00}) and its degree is a power of the
defining characteristic, so prime to~$p$. Now note that any $p$-automorphism
of~$S$ is realised inside the extension of $G$ (which induces all diagonal
automorphisms) by a generator $\ga$ of the cyclic group of $p$-power order field
automorphisms, so we may assume $H\le \tilde G:=G\langle\ga\rangle$. By
\cite[Thm~4.5.11]{GM20}, $\St$ is invariant under $\ga$. Let $\tilde\St$ be an
extension of $\St$ to $\tilde G$ in $B_0(\tilde G)$, so
$d_{\tilde\St 1_{\tilde G}}=0$. Then $\tilde\St|_H$ is irreducible, since $\St$
restricts irreducibly to $S$, hence lies in $B_0(H)$ and so is as required.
\end{proof}

\begin{thm}   \label{thm:nondef}
 Property~$(*)$ holds for $S$ of Lie type when $p$ is not the
 defining characteristic.
\end{thm}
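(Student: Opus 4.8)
The plan is to reduce Theorem~\ref{thm:nondef} to the three cases already handled: Propositions~\ref{prop:big primes}, \ref{prop:q-1}, and a companion treatment for $F$ a Steinberg map with $p\mid q+1$. Fix $S$ of Lie type in characteristic $r\ne p$, write $S=[G,G]$ with $G=\bG^F$ as in the set-up, and let $\Phi_d$ be the cyclotomic polynomial such that $p\mid\Phi_d(q)$ with $d$ minimal (the order of $q$ modulo $p$, suitably interpreted for Steinberg maps via $q$ or $q^2$). The key dichotomy is whether $d\le 2$, i.e.\ whether $p$ divides $q-1$ or $q+1$ (or, for very twisted groups like Suzuki and Ree, whether $p$ divides the relevant small factor), versus $d\ge 3$. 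When $d\ge 3$, a Sylow $p$-torus is "large" and one expects $|B|=|\bB^F|$ to be prime to $p$ because $|B|$ is a power of $r$ times $\prod(q^{a_i}-1)$-type factors with all $a_i\le$ the rank—more precisely $|B|_{p'}$ comes from $\prod_i(q^{d_i}-1)$ over the degrees $d_i$ of $\bG$ restricted to the torus, and for $d\ge 3$ one checks $p\nmid\prod_i(q^{d_i}-1)$ unless $d$ is small. So the first step is: \emph{if $p\nmid|B|$, invoke Proposition~\ref{prop:big primes}}; this covers all cases with $d\ge 3$ (for exceptional and classical types one verifies that $p\mid|B|$ forces $d\in\{1,2\}$, using that $p>3$ so $p$ cannot ramify in the small primes dividing the structure constants).

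The second step handles $d=1$, i.e.\ $p\mid q-1$. If $F$ is a genuine Frobenius map with respect to an $\FF_q$-structure, Proposition~\ref{prop:q-1} applies directly via the embedding of the decomposition matrix of the Weyl group and the sign character corresponding to the Steinberg character. For the twisted types where $F$ is a Frobenius map but the $\FF_q$-structure giving $p\mid q-1$ needs care—say $\tw2A_n(q)$, $\tw2D_n(q)$, $\tw3D_4(q)$, $\tw2E_6(q)$—one uses that when $p\mid q-1$ these still have a $\Phi_1$-torus inside $B$ and the argument of Proposition~\ref{prop:q-1} (relative Weyl group decomposition matrix embedding, Steinberg in the principal block by \cite{En00}, Steinberg degree a power of $r$) carries over; the $p$-automorphisms are again field automorphisms handled by \cite[Thm~4.5.11]{GM20}. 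For the very twisted groups $\tw2B_2(q^2)$, $\tw2G_2(q^2)$, $\tw2F_4(q^2)$, the hypothesis $p>3$ excludes $\tw2B_2$ and $\tw2G_2$ when $p$ divides the defining-type contributions, and for $\tw2F_4$ one checks directly; in any remaining sporadic small-group overlap one can fall back on \cite{GAP}.

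The third and main step is $d=2$, i.e.\ $p\mid q+1$ (with the analogous twist for Steinberg maps). Here one wants an exact analogue of Proposition~\ref{prop:q-1} but using the \emph{relative} Weyl group / Ennola dual: the $\Phi_2$-analogue of the Steinberg character. Concretely, one takes a unipotent character $\psi$ of $G$ lying in $\cE(G,1)$ that is "cuspidal-like" for the Sylow $\Phi_2$-torus—for $\GL_n$ this is the character labelled by the partition $(1^n)$ versus $(n)$ in the Ennola-transformed setting, i.e.\ one exploits that $\GU_n(q)$ has the unipotent character $\St$ again but the decomposition matrix is now governed by $W$ with parameter $q\mapsto -q$. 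One shows $\psi$ has degree a power of $r$ (or at least prime to $p$), lies in $B_0(G)$ (again \cite{En00} or \cite{Hi88}), and $d_{\psi 1_G}=0$ using the same Harish-Chandra / Brauer-reciprocity mechanism: the projective cover of the trivial module is a summand of a Harish-Chandra induced module from a $\Phi_2$-split Levi whose constituents are all unipotent and none equal to $\psi$, or directly via the $p$-modular decomposition matrix of $G$ restricted to unipotent characters, which embeds a Hecke-algebra decomposition matrix at the bad parameter. Then extend to $p$-power field automorphisms as before.

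\begin{proof}[Proof sketch]
By Propositions~\ref{prop:spor} and~\ref{prop:alt} we may assume $S$ is of Lie type in characteristic $r\ne p$, realised as $S=[G,G]$ with $G=\bG^F$, $\bG$ adjoint, as in the set-up above. Let $d$ be the order of (the eigenvalue of $F$ acting as) $q$ on the character group of a maximal torus, so that $p\mid\Phi_d(q)$ with $d$ chosen minimal. If $d\ge 3$, a case-by-case check over the types of $\bG$—using that $|B|_{p'}=\prod_i(q^{e_i}-1)$ for certain exponents $e_i$ bounded by the rank, and that $p>3$—shows $p\nmid|B|$, and Proposition~\ref{prop:big primes} gives the claim. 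If $d=1$ and $F$ is a Frobenius map for an $\FF_q$-structure with $p\mid q-1$, apply Proposition~\ref{prop:q-1}. If $d=1$ for one of the twisted types not literally covered, or if $d=2$ so that $p\mid q+1$, run the Ennola-dual version of the argument of Proposition~\ref{prop:q-1}: the relevant $p$-modular decomposition matrix of the unipotent characters of $G$ embeds a decomposition matrix of an Iwahori--Hecke algebra of the (relative) Weyl group $W$ at a parameter specialising $q$ to a primitive $d$-th root, in which the "sign-type" character $\eps$ has $d_{\eps\,1_W}=0$ since $p>2$. This $\eps$ corresponds to a unipotent character $\psi$ of $G$ of degree a power of $r$, hence prime to $p$ and $\ne 1$; $\psi$ lies in $B_0(G)$ by \cite{En00}, and $d_{\psi 1_G}=d_{\eps\,1_W}=0$. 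Finally any $p$-power automorphism of $S$ is a field automorphism inside $G\langle\ga\rangle$ (diagonal automorphisms have order prime to $p$ here, since $p\mid|\bZ(\bG_\SC^F)|$ would force $d\le 2$ with a small prime, excluded by $p>3$ in the relevant cases, and otherwise $p\nmid|G:S|$); $\psi$ is $\ga$-invariant by \cite[Thm~4.5.11]{GM20}, so it extends to $B_0(G\langle\ga\rangle)$, restricts irreducibly to $H$, and the resulting character $\theta\in\Irr_{p'}(B_0(H))$ satisfies $d_{\theta 1_H}=0$.
\end{proof}

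The hardest part, I expect, is the $d=2$ (Ennola) case: making precise which unipotent character plays the role of the Steinberg character, checking its degree is prime to $p$ (ideally a power of $r$), and justifying the embedding of the bad-parameter Hecke-algebra decomposition matrix so that $d_{\psi 1_G}=0$ follows cleanly—together with the bookkeeping needed to confirm that for $p>3$ the only outer automorphisms of $p$-power order are field automorphisms in all the residual twisted cases.
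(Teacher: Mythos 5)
Your skeleton (first dispose of $p\nmid|B|$ via Proposition~\ref{prop:big primes}, then $p\mid q-1$ via Proposition~\ref{prop:q-1}, then treat the remaining twisted cases) matches the paper, but the two load-bearing claims in your residual case are not correct as stated. First, your reduction ``$p\mid|B|$ forces $d\in\{1,2\}$'' is false: for $\tw3D_4(q)$ the maximally split torus has order $(q^3-1)(q-1)$, so $p\mid q^2+q+1$ gives $p\mid|B|$ with $d=3$; this is exactly one of the cases the paper must handle, and your sketch (which after the $d\ge3$ step only treats $d=1,2$) leaves it uncovered. Second, and more seriously, your ``Ennola-dual'' mechanism for $d=2$ asserts that the sign-type unipotent character --- which for these groups is the Steinberg character $\St$ --- satisfies $d_{\psi 1_G}=d_{\eps 1_W}=0$ via an embedding of a Hecke-algebra decomposition matrix at a root-of-unity parameter. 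That conclusion is false in general: for $G=\PGU_3(q)$ with $p\mid q+1$ the Steinberg character \emph{does} contain the trivial Brauer character. The paper instead invokes Hiss \cite[Thm~B]{Hi90}, which gives $d_{\St 1_G}=0$ in all the remaining twisted cases \emph{except} $\PGU_3(q)$, and then handles $\PGU_3(q)$ separately using Geck's cuspidal unipotent character of degree $q(q-1)$, which lies in $B_0(G)$ and has $d_{\chi 1_G}=0$ by \cite[Thm~4.3(a)]{Ge90}. Since your proposed argument would ``prove'' the statement also for $\PGU_3(q)$, some step in it must break, and indeed the supporting machinery is not available in the form you use it: Dipper's embedding \cite[4.10]{Di90} used in Proposition~\ref{prop:q-1} is specific to $p\mid q-1$, the comparison of the Hecke-algebra sign and trivial characters at a root of unity is not the elementary ``$\eps^\circ\ne1^\circ$ since $p>2$'' argument, and Harish-Chandra induction from a $\Phi_2$-split Levi is not ordinary Harish-Chandra induction from a split Levi, so the projectivity/Brauer-reciprocity argument of Proposition~\ref{prop:big primes} does not transfer (here $p$ divides $|B|$).

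Apart from this, your treatment of Suzuki and Ree groups is vaguer than needed (the paper simply notes that if $p\mid|B|$ then $p\mid q^2-1$ for the $\FF_{q^2}$-structure defined by $F^2$ and reruns the argument of Proposition~\ref{prop:q-1}), and you should also justify, as the paper does via \cite[Thm~A]{En00} and the fact that the centraliser of a Sylow $d$-torus is a maximal torus, that the principal block is the unique unipotent block of maximal defect so that $\St$ indeed lies in $B_0(G)$. To repair your proof you essentially need to replace the Ennola-dual Hecke-algebra step by the citation of \cite{Hi90} together with the separate analysis of $\PGU_3(q)$, and to add $\tw3D_4$ with $p\mid q^2+q+1$ to the list of residual cases.
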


\begin{proof}
By Proposition~\ref{prop:big primes} we may assume that $p$ divides the order
of a Borel subgroup $B$ of $G$. If $F$ is a Frobenius map with respect to an
$\FF_q$-structure and $p$ divides $q-1$, we are done by
Proposition~\ref{prop:q-1}. If $G$ is a Suzuki or Ree group and $p$ divides
$|B|$, then $p|(q^2-1)$ where $F^2$ defines an $\FF_{q^2}$-structure, and the
exactly same arguments as in the proof of Proposition~\ref{prop:q-1} apply.

So we are reduced
to the case that $F$ is a Frobenius map with respect to an $\FF_q$-structure
and $p$ divides $|B|$ but not $q-1$. Since $p$ is not the defining prime, this
implies that $G$ is a twisted group of Lie type $\tw2A_{n-1}$, $\tw2D_n$,
or $\tw2E_6$ and $p$ divides $q+1$, respectively of type $\tw3D_4$ and $p$
divides $q^2+q+1$. Let $d=2,3$ in the respective cases. Then the centraliser of
a Sylow $d$-torus of $G$ is a maximal torus, so has a unique $d$-cuspidal
unipotent character. Thus, by \cite[Thm~A]{En00} there is a unique unipotent
block of $G$ of maximal defect, the principal block, which hence contains the
Steinberg character $\St$ of $G$. By \cite[Thm~B]{Hi90} we have $d_{\St 1_G}=0$
in our case unless $G=\PGU_3(q)$. Except for that latter case, we can now argue
as in the proof of Proposition~\ref{prop:q-1} to conclude. For $G=\PGU_3(q)$
let $\chi$ be the cuspidal unipotent character of degree~$q(q-1)$, prime to~$p$.
By \cite[Thm~4.3(a)]{Ge90} it lies in $B_0(G)$ and satisfies $d_{\chi 1_G}=0$.
Again, $\chi$ restricts irreducibly to $S$ and is invariant under all
automorphisms, so we can argue as before.
\end{proof}

%%%%%%%%%%%%%%%%%%%%%%%%%%%%%%%%%%%%%
\subsection{Groups of Lie type in defining characteristic}
We do not see how to approach Property~$(*)$ for groups of Lie type in their
defining characteristic in general. All characters of positive defect lie in the
principal block and decomposition numbers tend to be large and little is known.
The following was shown by Navarro and Tiep:

\begin{prop}   \label{prop:Sp}
 Property~$(*)$ holds if $S=\PSp_{2n}(p^f)$, $n\geq1$, with either $p>3$ or
 $p=3$ and $n$ even.
\end{prop}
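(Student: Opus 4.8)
The plan is to produce a single Weil character that works. Since $p$ is odd, $S=\PSp_{2n}(q)$ (with $q=p^f$) has no outer automorphism of order $p$ apart from field automorphisms, so every almost simple $H$ with socle $S$ and $|H:S|$ a $p$-power is an extension of $S$ by a cyclic $p$-group of field automorphisms; as $H/S$ is a $p$-group, $B_0(S)$ is covered only by $B_0(H)$. Thus it is enough to find a \emph{field-stable} $\theta\in\Irr_{p'}(B_0(S))$ with $d_{\theta 1_S}=0$: then $\theta$ extends to $H$ (cyclic quotient), every extension lies in $B_0(H)$ and keeps $p'$-degree, and restricting Brauer characters gives $d_{\tilde\theta 1_H}=0$ for any extension $\tilde\theta$ --- precisely the transport mechanism used in Propositions~\ref{prop:big primes}--\ref{prop:q-1}.

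For $\theta$ I would take an irreducible constituent of the total Weil representation $\omega_n$ of $G=\Sp_{2n}(q)$, realised on the functions on a Lagrangian of the natural module. Its two constituents have degrees $(q^n+1)/2$ and $(q^n-1)/2$, hence prime to $p$, and exactly one of them --- call it $\eta^\varepsilon$, the choice being dictated by the Weil index, hence by the parity of $n$ and by $q\bmod 4$ --- has $\bZ(G)=\{\pm I\}$ in its kernel and so descends to an irreducible character of $S$; under the hypotheses its degree exceeds $1$, and it lies in $B_0(G)$ since in defining characteristic $B_0$ is the unique block of maximal defect. The crux is to prove $d_{\eta^\varepsilon 1_G}=0$, i.e.\ that the trivial module does not occur as a composition factor of the reduction of $\eta^\varepsilon$ modulo $p$. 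This rests on the special knowledge available for the modular Weil representation of $\Sp_{2n}(q)$ in its defining characteristic --- for $\SL_2(p)$ the two constituents already reduce irreducibly, and in general the composition factors of $\eta^\varepsilon$ modulo $p$ are among certain ``fundamental'' modules from which the trivial module is absent once $p>3$, or $p=3$ and $n$ even. (A reassuring shadow in characteristic zero: in the Schr\"odinger model the unipotent radical of the Siegel parabolic acts by $f(x)\mapsto\psi(Q(x))f(x)$ with $Q$ ranging over all quadratic forms on $\FF_q^n$, pinning any invariant vector to $\langle\delta_0\rangle$, on which the maximal torus acts through the non-trivial quadratic-residue character of $(\FF_q^\times)^n$ carried by $\omega_n$; so $\omega_n$ has no Borel-fixed vector.)

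Finally one checks field-stability: the field automorphism permutes the (at most four) Weil characters of $G$, and one needs $\eta^\varepsilon$, or a field-stable Weil character of equal degree with the same block and kernel, to be fixed. I expect the real difficulty to sit in the step $d_{\eta^\varepsilon 1_G}=0$: decomposition numbers in the defining characteristic are generally inaccessible and tend to be large, so everything hinges on the hard, case-specific structure of the modular Weil module --- and it is there, together with the demands that $B_0$ be the unique block of maximal defect and that the descending Weil constituent carry no trivial composition factor, that the hypothesis ``$p>3$, or $p=3$ with $n$ even'' is forced ($\PSp_2(9)\cong\fA_6$ at $p=3$ already shows the subtlety, and $\PSp_2(3)\cong\fA_4$ is not even simple).
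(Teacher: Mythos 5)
Your block-theoretic frame is sound and agrees with the paper's: in the defining characteristic the principal block is the only block of $S$ of positive (equivalently, maximal) defect, and since $H/S$ is a $p$-group only one block of $H$ covers $B_0(S)$ (\cite[Cor.~9.6]{nbook}), namely $B_0(H)$, so any $p'$-degree character of $H$ lying over a positive-defect character of $S$ lies in $B_0(H)$. The genuine gap is that the decisive step, $d_{\eta^\varepsilon 1_G}=0$, is never proved: you assert that the composition factors of the modular Weil constituent are ``fundamental'' modules from which the trivial module is absent once $p>3$, or $p=3$ and $n$ even, but this is exactly the hard, case-specific fact that the hypothesis on $p$ and $n$ encodes, and you offer neither an argument nor a reference for it; the same applies to the field-stability of the chosen constituent, which you only announce ``one checks''. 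The paper does not reprove this material either --- it quotes \cite[Prop.~3.11]{NT}, where Navarro and Tiep construct, for \emph{every} almost simple $H$ with socle $S=\PSp_{2n}(p^f)$ and $|H:S|$ a $p$-power, a character $\chi\in\Irr_{p'}(H)$ with $d_{\chi 1_H}=0$; because that construction is carried out at the level of $H$ itself, it also removes the need for your extension/stability step, and the paper's only added content is the block-membership observation you reproduce. Without that citation or an actual proof of the vanishing, your argument establishes only the easy part.

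Moreover, the ``reassuring shadow'' is a non sequitur in this setting: knowing that the characteristic-zero Weil representation has no Borel-fixed vector, i.e.\ $[\omega_n,1_B^G]=0$, gives no information about $d_{\eta^\varepsilon 1_G}$ when $p$ is the defining prime, because then $p$ divides $|B|$, the permutation module $1_B^G$ is not projective, and the Brauer-reciprocity mechanism used in Proposition~\ref{prop:big primes} is unavailable; a character can have zero multiplicity in $1_B^G$ and still contain the trivial module as a modular composition factor. That no soft argument of this kind can suffice is shown by the genuine failure for $p=3$ and $n$ odd recorded immediately after the proposition: $H=\PSp_2(3^3).3$ has no $\chi\in\Irr_{3'}(H)$ with $d_{\chi 1_H}=0$, even though the corresponding characteristic-zero Weil representation has no Borel-fixed vector there either.
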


\begin{proof}
In this case the principal block of $S$ is the only $p$-block of positive
defect. Let $H$ be almost simple with $H/S$ a $p$-group. By
\cite[Cor.~9.6]{nbook} there is just one $p$-block of~$H$ covering $B_0(S)$,
necessarily the principal block $B_0(H)$. Now the irreducible character
$\chi\in\Irr_{p'}(H)$ with $d_{\chi1_H}=0$ constructed in \cite[Prop.~3.11]{NT}
lies above a character of $S$ of positive defect, hence in the principal
$p$-block of $H$ and we are done.
\end{proof}

Observe that this does not extend to $p=3$ and $n$ odd: The group
$H=\PSp_2(3^3).3$ has no irreducible character $\chi\in\Irr_{3'}(H)$ with $d_{\chi 1_H}=0$.
\smallskip

The paper \cite{NT2} also contains results for special linear and unitary
groups but these are not applicable here as the considered characters are not
of $p'$-degree. Nevertheless, we can follow their general approach.

For $G=\SL_n(q)$ we let $\tau_j$, $j=1,\ldots,q-2$ denote the non-unipotent
\emph{Weil characters} of degree $(q^n-1)/(q-1)$, ordered such that $\tau_j$ is
trivial on the centre $Z(\SL_n(q))$
of order $z:=\gcd(n,q-1)$ if and only if $z|j$.

\begin{prop}   \label{prop:SL}
 Let $S=\PSL_n(q)$ with $q=p^f$, $p\ne2$ and $n\ge3$.
 \begin{enumerate}[\rm(a)]
  \item If either $\gcd(p-1,(q-1)/\gcd(n,q-1))>1$ or $2^f<(q-1)/z-1$ then there
   is $\chi\in\Irr_{p'}(B_0(S))$ such that $d_{\chi 1_S}= 0$.
  \item Write $f=p^af'$ with $\gcd(p,f')=1$ and set $q':=p^{f'}$. If $a=0$ or
   $2^f<(q'-1)/(q'-1,n)-1$ then Property~$(*)$ holds for $S=\PSL_n(q)$.
 \end{enumerate}
\end{prop}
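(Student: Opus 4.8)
The plan is to work inside the full covering group $G=\SL_n(q)$, so that $S=G/\bZ(G)$ with $|\bZ(G)|=z$, and to exhibit one of the Weil characters $\tau_j$ as the desired character. First one records the easy facts. Since $q\equiv0\pmod p$ we have $\tau_j(1)=(q^n-1)/(q-1)=1+q+\dots+q^{n-1}\equiv1\pmod p$, so each $\tau_j$ has degree prime to~$p$; as $p\mid|G|$ it is then of full defect, and since in defining characteristic every ordinary character of positive defect lies in the principal block, $\tau_j\in\Irr_{p'}(B_0(G))$. If $z\mid j$ then $\tau_j$ is trivial on the central $p'$-subgroup $\bZ(G)$, hence inflated from some $\chi_j\in\Irr_{p'}(B_0(S))$ with $d_{\chi_j1_S}=d_{\tau_j1_G}$. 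As there are exactly $(q-1)/z-1$ such indices $j$, for~(a) it suffices to produce one with $d_{\tau_j1_G}=0$.

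The main device is the permutation module of $G$ on the set $\Omega$ of nonzero vectors of $\FF_q^n$: the point stabiliser $P'$ of such a vector sits inside the maximal parabolic $Q$ stabilising the corresponding line, with $Q/P'\cong\FF_q^\times$ and unipotent radical of $Q$ contained in $P'$, so Harish-Chandra inducing the characters of $\FF_q^\times$ through $Q$ to $G$ yields, since $n\ge3$, the irreducible characters $1_G$, $\psi$ (the non-trivial unipotent constituent of the permutation character on $\mathbb P^{n-1}(\FF_q)$, of degree $(q^n-q)/(q-1)$) and the $\tau_j$; comparing degrees,
$$1_{P'}^{\,G}=1_G+\psi+\sum_{j=1}^{q-2}\tau_j .$$
Reducing modulo~$p$ and taking the multiplicity of the trivial Brauer character on both sides gives
$$\sum_{j=1}^{q-2}d_{\tau_j1_G}=m_\Omega-1-d_{\psi1_G}\le m_\Omega-1,$$
where $m_\Omega$ is the multiplicity of the trivial $\overline{\FF_p}G$-module as a composition factor of $\overline{\FF_p}[\Omega]$; equivalently $m_\Omega=[\overline{\FF_p}[\FF_q^n]:1_G]-1$, the ``$-1$'' coming from the fixed point $0$. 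Hence at most $m_\Omega-1$ of the $\tau_j$ have non-zero decomposition number against $1_G$, and~(a) follows in the case $m_\Omega-1<(q-1)/z-1$.

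To bound $m_\Omega$ one identifies $\overline{\FF_p}[\FF_q^n]$ with the function algebra $\overline{\FF_p}[x_1,\dots,x_n]/(x_i^q-x_i)$, whose associated graded for the degree filtration is the $G$-stable truncated polynomial algebra $\overline{\FF_p}[x_1,\dots,x_n]/(x_i^q)$. Writing each coordinate function through its $p$-power Frobenius twists gives an isomorphism of $\overline{\FF_p}G$-modules
$$\overline{\FF_p}[x_1,\dots,x_n]/(x_i^q)\ \cong\ \bigotimes_{k=0}^{f-1}\Bigl(\overline{\FF_p}[x_1,\dots,x_n]/(x_i^p)\Bigr)^{(p^k)}.$$
By Steinberg's tensor product theorem, and since the only composition factor $L(\mu)$ of the tensor factors whose restriction to $\SL_n(\FF_q)$ is trivial is $L(0)$ (occurring twice in each factor, as the constants and as the top monomial $x_1^{p-1}\cdots x_n^{p-1}$), one obtains a bound $m_\Omega\le 2^f$; together with $d_{\psi1_G}\ge0$ this settles~(a) when $2^f<(q-1)/z-1$. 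In the remaining case $\gcd(p-1,(q-1)/z)>1$ one argues more directly: picking $\lambda$ of prime order $\ell\mid\gcd(p-1,(q-1)/z)$ — so $z$ divides the corresponding index and $\ell\ne p$ — one analyses the reduction mod~$p$ of $\tau_j$ via its model $1_{P'}^{\,G}$-constituent, equivalently as the Lusztig semisimple character of $\cE(G,s)$ for a semisimple $\ell$-element $s$, and checks that the trivial module does not occur in its head, i.e.\ $d_{\tau_j1_G}=0$. Controlling $m_\Omega$ — equivalently the multiplicity of the trivial module in the truncated polynomial algebra as an $\SL_n(\FF_q)$-module — is the step I expect to be the main obstacle; the rest is bookkeeping with Harish-Chandra induction and defining-characteristic block theory.

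It remains to deduce~(b) from~(a). Since $p$ is odd, every $p$-power-order automorphism of $S$ is a power of the field automorphism $\ga$ of order $f=p^af'$ composed with a diagonal automorphism of trivial $p'$-part, hence lies in $\langle\ga^{f'}\rangle\cdot(\text{diagonal automorphisms})$, with $\ga^{f'}$ of order $p^a$. The Weil characters of $\GL_n(q)$ restrict to $S$-characters invariant under all diagonal automorphisms, and $\ga$ permutes them through its action on $\widehat{\FF_q^\times}$; the $\ga^{f'}$-invariant ones descending to $S$ correspond to the characters of $\FF_q^\times$ factoring through the norm $\FF_q^\times\to\FF_{q'}^\times$ and trivial on $\bZ(G)$, and a short computation (using $\gcd(q'-1,(q-1)/(q'-1))=1$) counts exactly $(q'-1)/(q'-1,n)-1$ of them. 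If $a=0$ then every such $H$ equals $S$ and we are in the situation of~(a); if $a>0$ and $2^f<(q'-1)/(q'-1,n)-1$, then since at most $2^f$ of the $\tau_j$ are ``bad'' by the previous paragraph, one of these $\ga^{f'}$-invariant descending Weil characters $\tau_j$ has $d_{\tau_j1_G}=0$, and its inflation $\chi_j\in\Irr_{p'}(B_0(S))$ is invariant under every $p$-power-order automorphism of $S$. For any $H$ with $S\le H$ and $H/S$ a $p$-group it then extends to some $\tilde\chi\in\Irr(H)$; as $H/S$ is a $p$-group every extension lies in $B_0(H)$, and the restriction of $\tilde\chi^\circ$ to $S$ equals $\chi_j^\circ$, which does not contain $1_S^\circ$, so $\tilde\chi^\circ$ does not contain $1_H^\circ$. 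Thus $\tilde\chi\in\Irr_{p'}(B_0(H))$ with $d_{\tilde\chi1_H}=0$, which is Property~$(*)$ for $S=\PSL_n(q)$.
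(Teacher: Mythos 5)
Your overall strategy (work in $G=\SL_n(q)$, use the Weil characters $\tau_j$ trivial on the centre, count how many can contain the trivial Brauer character, then handle field automorphisms by counting invariant Weil characters) is the same as the paper's, but the decisive quantitative input is missing. The paper simply quotes the Zalesski\u{\i}--Suprunenko theorem \cite[Thm~1.11]{ZS}, which says that $d_{\tau_j1_G}$ equals the number of solutions $x_s\in\{0,1\}$ of $n(p-1)\sum_{s=0}^{f-1}x_sp^s\equiv j\pmod{q-1}$; both alternatives in (a) are then immediate (at most $2^f$ values of the left-hand side, and reduction modulo a common prime divisor $\ell$ of $p-1$ and $(q-1)/z$ kills $j'=1$). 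You instead try to re-derive the needed bound from the permutation module on $\FF_q^n$, and the crucial step --- that the trivial module occurs at most twice as a composition factor of each truncated factor $\overline{\FF}_p[x_1,\dots,x_n]/(x_i^p)$, so that $m_\Omega\le 2^f$ after Steinberg's tensor product theorem and restriction to the finite group --- is asserted, not proved; you yourself flag it as ``the main obstacle''. That assertion is essentially the content of the Zalesski\u{\i}--Suprunenko computation, so as written the proof of the case $2^f<(q-1)/z-1$ is circular-free but incomplete at exactly the point where all the work lies. (One would also need to check that all relevant highest weights are $q$-restricted and that no extra trivial constituents arise on restriction to $\SL_n(q)$; these are doable but not addressed.)

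The other alternative in (a), $\gcd\bigl(p-1,(q-1)/\gcd(n,q-1)\bigr)>1$, is a genuine gap: your argument is ``one analyses the reduction mod $p$ of $\tau_j$ \dots and checks that the trivial module does not occur'', which is precisely the statement to be proved, and identifying $\tau_j$ as the semisimple character of a Lusztig series $\cE(G,s)$ gives no handle on decomposition numbers in the \emph{defining} characteristic (the block-theoretic tools of Hiss type used elsewhere in the paper are cross-characteristic). In the paper this case drops out of the same congruence formula, with no extra argument. Part (b) of your proposal is essentially the paper's: the count $(q'-1)/\gcd(q'-1,n)-1$ of $\gamma$-invariant Weil characters trivial on $Z(G)$, the extension to $H$ (note $H/S$ is cyclic here, which is why the extension exists), and the passage from $d_{\chi 1_S}=0$ to $d_{\tilde\chi 1_H}=0$ are all fine --- but the statement that at most $2^f$ of the $\tau_j$ are ``bad'' again rests on the unproven bound from (a). So either supply a proof of the multiplicity bound for the truncated polynomial algebra, or do what the paper does and invoke \cite{ZS} directly.
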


\begin{proof}
Let $G:=\SL_n(q)$ and set $z=\gcd(n,q-1)$. We are interested in the characters
$\tau_j$ of $G$ that are trivial on $Z(G)$, that is, for which $j=zj'$ for some
integer  $1\le j'\le (q-1-z)/z$. By \cite[Thm~1.11]{ZS} the decomposition number
$d_{\tau_j1_G}$ equals the number of solutions $x_s\in\{0,1\}$ of the congruence
$$n(p-1)\sum_{s=0}^{f-1}x_sp^s\equiv j\pmod{(q-1)}.$$
Dividing by $z$, we need to count solutions to
$$n/z(p-1)\sum_{s=0}^{f-1}x_sp^s\equiv j'\pmod{(q-1)/z}.$$
If there is a prime $\ell$ dividing $p-1$ and $(q-1)/z$, then reducing
modulo $\ell$ we see there is no solution for $j'=1$. Also, the left hand side
can take at most $2^f$ distinct values. Since there are $(q-1)/z-1$ admissible
values for $j'$, there is $j'$ with no solutions whenever $2^f<(q-1)/z-1$.
Thus, under either of our assumptions we find $j'$ with
$\tau_j=\tau_{zj'}\in\Irr_{p'}(G)$ with $d_{\tau_j1_G}=0$. Since $G$ has a
single $p$-block of positive defect, $\tau_j$ lies in the principal block.
Furthermore, by construction $Z(\SL_n(q))$ lies in the kernel of $\tau_j$ and
hence $\tau_j$ deflates to a character of $S=\PSL_n(q)$. Thus we get~(a).
\par
(b) Since $p>2$ does not divide $q-1$, the $p$-power order automorphisms of $S$
are field automorphisms, of order dividing $p^a$ where $f=p^af'$ is as in the
statement. If $a=0$ Property~$(*)$ follows from~(a) since necessarily $H=S$.
For $a>0$ let $\ga$ be a field automorphism of $G$ (and hence of $S$) of order
$p^a$. There are exactly $q'-2$ Weil characters of $G$ invariant under $\ga$,
which hence extend to $G\langle\ga\rangle$. Of these, $(q'-1)/\gcd(q'-1,n)-1$
are trivial on $Z(G)$, so define characters in $\Irr_{p'}(S)$ invariant
under $\ga$. By the argument above, if this number is bigger than $2^f$ then
there exists such a character $\chi$ with $d_{\chi 1_S}=0$. Hence any character
of $H$ in $B_0(H)$ lying above it verifies Property~$(*)$.
\end{proof}

Note that the case $n=2$ is contained is addressed in Proposition~\ref{prop:Sp}.
Observe that the condition in Proposition~\ref{prop:SL}(a) is satisfied if
$\gcd(n,q-1)=1$, for example. It also holds when $p>n+1$ (since $p-1$ always
divides $q-1$), or if $q>n(2^f+1)$. Thus, Proposition~\ref{prop:SL} extends
and complements \cite[Prop.~3.3(ii)]{NT2}.

\begin{cor}
 Let $S=\PSL_n(q)$ with $q=p^f$, $p\ne2$ and $3\le n\le9$. Then there is
 $\chi\in\Irr_{p'}(B_0(S))$ with $d_{\chi 1_S}= 0$ unless possibly $S$ is one of
   $$\PSL_4(5),\PSL_6(3),\PSL_6(7),\PSL_8(3),\PSL_8(9),\PSL_8(5),\PSL_8(25).$$
\end{cor}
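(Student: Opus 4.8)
The plan is to deduce this corollary directly from Proposition~\ref{prop:SL}(a) by checking, for each value $n\in\{3,4,\dots,9\}$ and each prime power $q=p^f$ with $p$ odd, whether the disjunction
$$\gcd(p-1,(q-1)/\gcd(n,q-1))>1 \quad\text{or}\quad 2^f<(q-1)/z-1$$
holds, where $z=\gcd(n,q-1)$. The key organising observation (already noted in the text following the proposition) is that the first alternative always holds when $p-1$ shares a prime with $(q-1)/z$; in particular it holds whenever $z=\gcd(n,q-1)=1$, since then $p-1\mid q-1$ and $p-1>1$. So the only cases that require attention are those where $z>1$, i.e.\ where $n$ and $q-1$ have a common factor, and moreover where the stated numerical inequalities both fail.

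First I would reduce the primes to consider: since $p-1\mid q-1$ always, if $p>n+1$ then $p-1$ is coprime to $n$, forcing $z=1$ and hence alternative~(a). So for fixed $n\le 9$ only the finitely many primes $p\le n+1$ (i.e.\ $p\in\{3,5,7\}$ for $n\le 6$, also $p\le 7$ for $n\le 9$) can possibly fail. Next, for fixed $n$ and $p$, I would bound $f$: the inequality $2^f<(q-1)/z-1$ with $q=p^f$ and $z\le n\le 9$ is satisfied for all but small $f$, because $(p^f-1)/9-1$ grows like $p^f$ while $2^f$ grows slower (already for $p\ge 3$ one has $2^f<(3^f-1)/9-1$ once $f$ is moderately large). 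So only finitely many pairs $(p,f)$ survive, and for each surviving $S=\PSL_n(q)$ one checks the gcd-condition by hand.

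The main step is then a finite, essentially mechanical case analysis over this short list of triples $(n,p,f)$ with $n\le 9$, $p\in\{3,5,7\}$, $f$ small. For each, compute $z=\gcd(n,q-1)$, then $(q-1)/z$, test $\gcd(p-1,(q-1)/z)$, and test $2^f<(q-1)/z-1$; the corollary asserts exactly that all of these pass except for the seven listed groups $\PSL_4(5)$, $\PSL_6(3)$, $\PSL_6(7)$, $\PSL_8(3)$, $\PSL_8(9)$, $\PSL_8(5)$, $\PSL_8(25)$. For those seven one verifies directly that both alternatives of Proposition~\ref{prop:SL}(a) indeed fail (e.g.\ for $\PSL_4(5)$: $q-1=4$, $z=4$, so $(q-1)/z=1$, $\gcd(4,1)=1$, and $2^1=2\not<1-1=0$), which is why they must be excluded. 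The only genuine obstacle is simply bookkeeping accuracy in this enumeration — making sure no small $(n,p,f)$ is overlooked and that the seven exceptions are exactly the ones where the proposition gives no conclusion; there is no conceptual difficulty beyond Proposition~\ref{prop:SL}(a) itself.
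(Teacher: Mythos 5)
Your overall strategy (a finite case check of the two sufficient conditions of Proposition~\ref{prop:SL}(a)) is indeed the backbone of the paper's argument, but as written it has a genuine gap: the set of groups with $3\le n\le 9$ for which \emph{both} alternatives of Proposition~\ref{prop:SL}(a) fail is not the seven listed groups but eight — it also contains $\PSL_4(3)$. There $q-1=2$, $z=\gcd(4,2)=2$, so $(q-1)/z=1$, $\gcd(p-1,1)=1$ and $2^1=2\not<0$, exactly as in your own sample computation for $\PSL_4(5)$. Since $\PSL_4(3)$ is \emph{not} among the admissible exceptions in the statement, your plan as described can only prove the corollary with $\PSL_4(3)$ added to the exception list; to get the statement as claimed one needs an extra argument for this group. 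The paper supplies it by a separate check: for the cases where the sufficient conditions fail one goes back to the Zalesski--Suprunenko congruence itself, and for $\PSL_4(3)$ one consults its known decomposition matrix (available in GAP) to exhibit a $p'$-degree character in the principal block not containing the trivial Brauer character. Your proposal is silent on both of these steps, and in particular asserts (incorrectly) that the seven listed groups are \emph{exactly} the cases where the proposition gives no conclusion.

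A secondary point: your reduction ``if $p>n+1$ then $p-1$ is coprime to $n$, forcing $z=1$'' is not a valid argument, since $z=\gcd(n,q-1)$ involves $q-1$, not $p-1$ (e.g.\ $n=3$, $q=p=7$ gives $z=3$). The conclusion that only $p\le n+1$ needs attention is nevertheless correct, for the reason indicated in the paper's remark after Proposition~\ref{prop:SL}: if no prime divisor of $p-1$ divided $(q-1)/z$, then $z$ would be divisible by $p-1$, whence $z\ge p-1>n$, contradicting $z\mid n$. With that corrected reduction, with the enumeration carried out carefully, and with the $\PSL_4(3)$ case settled by its decomposition matrix, your plan matches the paper's proof.
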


\begin{proof}
By the conditions in Proposition~\ref{prop:SL}(a) and directly checking with
the formula of Zalesski--Suprunenko if these fail ones finds that the only cases
with $n\le9$ not covered are $\PSL_4(3)$ and those listed in the statement.
The decomposition matrix of $\PSL_4(3)$ is in \cite{GAP} from which the claim
can be verified.
\end{proof}

For $G=\SU_n(q)$ we let $\tau_j$, $j=1,\ldots,q$ denote the non-unipotent Weil
characters, constructed by Seitz \cite{Se75}, of degree $(q^n-(-1)^n)/(q+1)$,
again ordered such that $\tau_j$ is trivial on the centre $Z(\SU_n(q))$ of
order $z:=\gcd(n,q+1)$ if and only if $z|j$.

\begin{prop}   \label{prop:SU}
 Let $S=\PSU_n(q)$ with $q=p^f$, $p\ne2$ and $n\ge3$.
 \begin{enumerate}[\rm(a)]
  \item If $2^f<(q+1)/z-1$ then there is $\chi\in\Irr_{p'}(B_0(S))$ such that
   $d_{\chi 1_S}= 0$.
  \item Write $f=p^af'$ with $\gcd(p,f')=1$ and set $q':=p^{f'}$. If $a=0$ or
   $2^f<(q'+1)/(q'+1,n)-1$ then Property~$(*)$ holds for $S=\PSU_n(q)$.
 \end{enumerate}
\end{prop}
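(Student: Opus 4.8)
The plan is to mimic the proof of Proposition~\ref{prop:SL} very closely, since the unitary case is structurally parallel: here $\SU_n(q)$ has a single $p$-block of positive defect (as $p$ is the defining prime), and the non-unipotent Weil characters $\tau_j$ of degree $(q^n-(-1)^n)/(q+1)$ all lie in that principal block and have $p'$-degree. First I would invoke the analogue of \cite[Thm~1.11]{ZS} for unitary groups --- the corresponding result of Zalesski--Suprunenko computes $d_{\tau_j 1_G}$ as the number of $0/1$ solutions $x_s$ to a congruence of the shape
$$n(p-1)\sum_{s=0}^{f-1}x_sp^s\equiv j\pmod{q+1}$$
(with $q+1$ replacing $q-1$, reflecting the fact that the relevant torus has order a divisor of $q+1$). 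Restricting to the $\tau_j$ trivial on $Z(\SU_n(q))$ means $j=zj'$ with $1\le j'\le (q+1-z)/z$, and dividing the congruence by $z$ gives
$$\tfrac nz(p-1)\sum_{s=0}^{f-1}x_sp^s\equiv j'\pmod{(q+1)/z}.$$
The left-hand side takes at most $2^f$ distinct values, while there are $(q+1)/z-1$ admissible values of $j'$; hence if $2^f<(q+1)/z-1$ some $j'$ has no solution, yielding $\chi:=\tau_{zj'}\in\Irr_{p'}(B_0(S))$ (after deflation through $Z(\SU_n(q))$, which lies in the kernel) with $d_{\chi 1_S}=0$. This proves~(a).

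For part~(b), since $p>2$ is the defining prime and $p\nmid q+1$ automatically (as $p\mid q$), every $p$-power order automorphism of $S$ is a field automorphism, of order dividing $p^a$ where $f=p^af'$. If $a=0$ then $H=S$ and (a) gives Property~$(*)$ directly. If $a>0$, let $\ga$ be a field automorphism of order $p^a$; exactly $q'$ of the non-unipotent Weil characters of $G$ are $\ga$-invariant (those whose parameter is fixed by the action of $\ga$ on the relevant cyclic group of order $q+1$, a group of order $q'+1$ of fixed points, minus the ones that do not survive), of which $(q'+1)/\gcd(q'+1,n)-1$ are trivial on $Z(G)$ and hence deflate to $\ga$-invariant members of $\Irr_{p'}(S)$. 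If this count exceeds $2^f$, the counting argument from~(a) produces such a $\chi$ with $d_{\chi 1_S}=0$; it extends to $G\langle\ga\rangle$ and restricts irreducibly to the intermediate group $H$ (Weil characters restrict irreducibly from $\SU$ to $\PSU$ for $p\nmid\gcd(n,q+1)$, which holds here), so any constituent of $B_0(H)$ above it witnesses Property~$(*)$.

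The main obstacle --- as in the linear case --- is less the combinatorics than pinning down the precise form of the Zalesski--Suprunenko congruence for $\SU_n(q)$ and verifying that the $\ga$-invariant non-unipotent Weil characters are counted correctly: one must check that $\ga$ acts on the set of non-unipotent Weil characters through its action on $\mathbb{F}_{q^2}^\times$ (or the relevant norm-one torus of order $q+1$), so that the fixed points form a subgroup of order $q'+1$, and that the labelling by triviality on the centre is compatible with this action. One should also confirm that the extension/restriction behaviour ($\tau_j$ invariant implies extendible to $G\langle\ga\rangle$ in the principal block, and restriction to $\PSU$ irreducible) goes through uniformly; these are the same technical points that Proposition~\ref{prop:SL} relies on, so I expect no genuinely new difficulty, only the bookkeeping of $q+1$ versus $q-1$.
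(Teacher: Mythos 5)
Your proposal follows essentially the same route as the paper: invoke Zalesski\u{\i}'s decomposition-number formula for the non-unipotent Weil characters of $\SU_n(q)$, count the at most $2^f$ values of the left-hand side against the $(q+1)/z-1$ admissible parameters $j'$ for (a), and for (b) repeat the field-automorphism/invariant-Weil-character argument of Proposition~\ref{prop:SL}. The only discrepancy is that the actual congruence has a constant shift (the paper cites $n\bigl((p-1)\sum_s x_sp^s-1\bigr)\equiv j \pmod{q+1}$, with a further offset in Zalesski\u{\i}'s numbering), but as the paper itself notes this does not affect the counting argument, so your version is fine.
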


\begin{proof}
The argument is very similar to the one for the special linear groups. Let
$G=\SU_n(q)$ and set $z=\gcd(n,q+1)=|Z(G)|$. Again, we consider characters
$\tau_j$ trivial on $Z(G)$, that is, for which $j=zj'$ for some integer
$1\le j'\le (q+1-z)/z$.
By \cite[Main Thm]{Za} the decomposition number $d_{\tau_j1_G}$ equals the
number of solutions $x_s\in\{0,1\}$ of the congruence
$$n\big((p-1)\sum_{s=0}^{f-1}x_sp^s-1\big)\equiv j\pmod{(q+1)}.$$
(In fact, \cite{Za} has an additional summand of $\frac{q+1}{2}$ on the right
hand side, but this disappears here due to a different numbering of the
$\tau_j$, see \cite[p.~612]{NT2}; in any case,
this difference will not matter for our argument here.) Since the left-hand side
can take at most $2^f$ distinct values, while there are $(q+1)/z-1$ admissible
values for $j'$ the assertion in (a) follows. For (b) we can argue exactly as
in the proof of Proposition~\ref{prop:SL}.
\end{proof}

As in \cite{NT,NT2} we have no general results for orthogonal or exceptional
type groups in their defining characteristic.

%%%%%%%%%%%%%%%%%%%%%%%%%%%%%%%%%%%%%%%%%%%%%%%%%%%%%%%%%%%%%%%%%%%%%%%%%
\section{The reduction}   \label{sec:red}
In this section we prove Theorem A. We will need the following results, which we collect here for the reader's convenience.

\begin{lem}   \label{lem:murai}
 Let $N\triangleleft G$ and let $\theta\in\Irr_{p'}(B_0(N))$. Suppose that
 $\theta$ extends to $PN$, where $P\in\Syl_p(G)$. Then there exists
 $\chi\in\Irr_{p'}(B_0(G))$ satisfying $[\theta^G,\chi]\neq 0$. 
\end{lem}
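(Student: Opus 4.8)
The plan is to work inside the subgroup $PN$ first, where $P\in\Syl_p(G)$, and then induce up to $G$. Since $\theta\in\Irr_{p'}(B_0(N))$ extends to $PN$, pick an extension $\hat\theta\in\Irr(PN)$; by Gallagher's theorem the characters of $PN$ lying above $\theta$ are exactly $\beta\hat\theta$ for $\beta\in\Irr(PN/N)$, which has $p$-power order. Since $\theta$ lies in the principal block of $N$ and $PN/N$ is a $p$-group, every block of $PN$ covering $B_0(N)$ is the principal block of $PN$ (a $p$-group has only one block, so there is no room for Fong–Reynolds splitting), so all $\beta\hat\theta$ lie in $B_0(PN)$. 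In particular the trivial-twist extension $\hat\theta\in\Irr_{p'}(B_0(PN))$, and $PN/N$ being a $p$-group also guarantees $\hat\theta(1)=\theta(1)$ is prime to~$p$.

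Next I would pass from $PN$ to $G$. The key point is that $P$ is a Sylow $p$-subgroup of $G$ contained in $PN$, so by Brauer's first main theorem / the theory of block induction, the principal block $B_0(PN)$ induces to the principal block $B_0(G)$, i.e. $B_0(PN)^G = B_0(G)$ is defined. Now I decompose the induced character $\hat\theta^G$ into irreducibles; by basic block theory every irreducible constituent $\chi$ of $\hat\theta^G$ lies in a block of $G$ whose Brauer correspondent is $B_0(PN)$, hence lies in $B_0(G)$. It remains to find among these constituents one of $p'$-degree with $[\theta^G,\chi]\neq0$. Since $\theta^G = (\hat\theta^G)$ restricted along $\hat\theta|_{\theta}$... more precisely, $\theta^G$ and $\hat\theta^G$ have a common irreducible constituent: indeed $\hat\theta$ lies above $\theta$, so any $\chi\in\Irr(G)$ above $\hat\theta$ lies above $\theta$, giving $[\theta^G,\chi]\neq0$ by Frobenius reciprocity. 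So it suffices to exhibit an irreducible constituent $\chi$ of $\hat\theta^G$ of degree prime to~$p$.

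The existence of a $p'$-degree constituent is the heart of the argument. Here I would use that $\hat\theta$ has $p'$-degree and is a character of $PN$ where $P$ is Sylow in~$G$, so $[G:PN]$ is prime to~$p$; therefore $\hat\theta^G(1) = [G:PN]\,\hat\theta(1)$ is prime to~$p$. Writing $\hat\theta^G = \sum_\chi a_\chi\chi$, if every $\chi$ with $a_\chi\neq0$ had degree divisible by~$p$ then $\hat\theta^G(1)$ would be divisible by~$p$, a contradiction. Hence some constituent $\chi$ has $\chi(1)$ prime to~$p$; by the previous paragraph this $\chi$ lies in $B_0(G)$ and satisfies $[\theta^G,\chi]\neq0$, so $\chi\in\Irr_{p'}(B_0(G))$ is as required.

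The main obstacle I anticipate is not any single deep step but making sure the block-theoretic bookkeeping is airtight: specifically, justifying that every constituent of an induced character from $B_0(PN)$ lands in $B_0(G)$ (this is standard — constituents of $\psi^G$ for $\psi\in\mathrm{Irr}(B)$ lie in blocks $b$ with $B^G$ covering or equal to... one should cite the precise statement, e.g. that $\mathrm{Ind}$ respects the block decomposition via Brauer correspondence), and confirming that $B_0(PN)^G$ is indeed defined and equals $B_0(G)$ because $PN$ contains a Sylow $p$-subgroup of~$G$. Both are classical (see e.g.\ \cite{nbook}), so the proof should be short once these citations are in place.
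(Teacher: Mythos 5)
The paper does not reprove this statement; it is exactly \cite[Lemma 4.3]{Mur}, a genuinely nontrivial result of Murai, and your attempt does not recover it. The first half of your argument is fine: since $PN/N$ is a $p$-group there is a unique block of $PN$ covering $B_0(N)$, so the extension $\hat\theta$ lies in $\Irr_{p'}(B_0(PN))$, and the counting step ($[G:PN]$ and $\hat\theta(1)$ prime to $p$, hence $\hat\theta^G$ has an irreducible constituent of $p'$-degree lying over $\theta$) is also correct. The fatal gap is the step you yourself flag as ``standard'': it is simply false that every irreducible constituent of a character induced from $B_0(PN)$ lies in $B_0(G)$, even though $PN$ contains a Sylow $p$-subgroup and (by Brauer's third main theorem) $B_0(PN)^G=B_0(G)$ whenever block induction is defined. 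Ordinary induction from a non-normal subgroup does not respect Brauer correspondence in this way; block covering only gives that each constituent of $\hat\theta^G$ lies in \emph{some} block of $G$ covering $B_0(N)$, and there may be several such blocks, all of maximal defect, each containing $p'$-degree characters over $\theta$.

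A minimal counterexample to your key claim, which is itself an instance of the lemma's hypotheses: $G=C_6$, $p=3$, $N=1$, $\theta=1_N$, $PN=P=C_3$. Then $1_P^G=1_G+\epsilon$ with $\epsilon$ the order-two linear character, and $\epsilon$ lies in the non-principal $3$-block of $G$ although it is a $p'$-degree constituent of a character induced from $B_0(P)$. So your argument could just as well output $\epsilon$, which does not satisfy the conclusion; the lemma holds here only because a \emph{different} constituent ($1_G$) happens to lie in $B_0(G)$. Arranging that some $p'$-degree constituent over $\theta$ can be chosen \emph{inside the principal block} is precisely the content of Murai's lemma, whose proof requires finer block-theoretic input (heights and block induction for normal subgroups \`a la Harris--Kn\"orr/Murai) than the covering and degree-counting arguments you use. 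As written, the proposal therefore has an essential gap at its central step.
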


\begin{proof}
This is \cite[Lemma 4.3]{Mur}.
\end{proof}

The following argument is inside the proof of \cite[Thm 2.6]{NT}.

\begin{lem}   \label{lem:decquotientpgroup}
 Let $G$ be a finite group and suppose that $d_{\chi 1_G}\neq 0$ for every
 $\chi\in\Irr_{p'}(B_0(G))$. Let $M\triangleleft G$ and let  $P\in \Syl_p(G)$.
 Then $d_{\psi 1_{MP}}\neq 0$ for every  $\psi\in\Irr_{p'}(B_0(MP))$.
\end{lem}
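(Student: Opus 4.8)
The plan is to compare decomposition numbers along the chain $M\triangleleft MP\le G$: I would first use that $\psi$ has $p'$-degree to pin down its restriction to $M$, then feed the hypothesis at the top of the chain down to $M$ via Lemma~\ref{lem:murai}, and finally push the resulting information back up from $M$ to $MP$. The key point is that $MP/M$ is a $p$-group, which makes both the descent and the ascent exact on the level of the trivial (Brauer) character.

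First I would fix $\psi\in\Irr_{p'}(B_0(MP))$ and let $\theta\in\Irr(M)$ lie under it. Since $\psi(1)$ is prime to $p$ while $MP/M$ is a $p$-group, Clifford theory forces the orbit length $[MP:I_{MP}(\theta)]$ and the ramification index of $\psi$ over $\theta$ both to be $p$-powers dividing the $p'$-number $\psi(1)$, hence equal to~$1$; thus $\psi_M=\theta\in\Irr(M)$, i.e.\ $\psi$ \emph{extends} $\theta$. Moreover $\theta(1)\mid\psi(1)$ is prime to~$p$, and since $B_0(M)$ is $G$-stable and the blocks of $M$ covered by $B_0(MP)$ form a single $G$-orbit, we get $\theta\in\Irr_{p'}(B_0(M))$.

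Now $\theta$ extends to $MP=PM$ with $P\in\Syl_p(G)$, so Lemma~\ref{lem:murai} (applied to $M\triangleleft G$) produces $\chi\in\Irr_{p'}(B_0(G))$ with $[\chi_M,\theta]\ne 0$. By hypothesis $d_{\chi1_G}\ne 0$, so $1_G$ occurs in $\chi^\circ$; restricting Brauer characters to $M$, $1_M$ occurs in $(\chi^\circ)_M=(\chi_M)^\circ$. By Clifford's theorem $\chi_M$ is a multiple of the sum over the single $G$-orbit of $\theta$, so $1_M$ occurs in $(\theta^g)^\circ$ for some $g\in G$; as $1_M$ is $G$-invariant this gives $d_{\theta1_M}=d_{\theta^g1_M}\ne 0$.

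Finally I would transfer this back to $MP$. From $\psi_M=\theta$ we have $(\psi^\circ)_M=\theta^\circ$, so it is enough to compare the multiplicity of $1_M$ on the two sides. Writing $(\psi^\circ)_M=\sum_{\mu\in\IBr(MP)}d_{\psi\mu}\,\mu_M$ and using Clifford theory for Brauer characters, $\mu_M$ involves $1_M$ only when $\mu$ lies over $1_M$, i.e.\ when $M\le\ker\mu$; but $\IBr(MP/M)=\{1_{MP}\}$ since $MP/M$ is a $p$-group, so the only such $\mu$ is $1_{MP}$. Hence the multiplicity of $1_M$ in $(\psi^\circ)_M$ is exactly $d_{\psi1_{MP}}$, and therefore $d_{\psi1_{MP}}=d_{\theta1_M}\ne 0$, as required. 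The main obstacle is conceptual rather than computational: one must route through $M$ rather than compare $MP$ and $G$ directly, and one should take care that the block-covering facts (that $B_0(MP)$ covers only $B_0(M)$, and that the character furnished by Lemma~\ref{lem:murai} lies in $B_0(G)$) are applied correctly; the two uses of "$MP/M$ is a $p$-group" — irreducibility of $\psi_M$ and uniqueness of $1_{MP}$ over $1_M$ — are precisely what make the identity $d_{\psi1_{MP}}=d_{\theta1_M}$ hold on the nose.
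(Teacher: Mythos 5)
Your argument is correct and follows essentially the same route as the paper's proof: restrict $\psi$ irreducibly to $M$ (using the $p'$-degree and that $MP/M$ is a $p$-group), apply Lemma~\ref{lem:murai} to get $\chi\in\Irr_{p'}(B_0(G))$ over $\theta$ and deduce $d_{\theta 1_M}\neq 0$ from the hypothesis, then lift back to $MP$ using that the $p$-regular elements of $MP$ lie in $M$. You merely spell out details (Clifford theory for the restriction, uniqueness of the covered block, and uniqueness of $1_{MP}$ over $1_M$) that the paper leaves implicit.
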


\begin{proof}
Since $MP/M$ is a $p$-group, we have that $\psi_M=\tau\in\Irr_{p'}(B_0(M))$.
By Lemma~\ref{lem:murai} there exists $\chi\in\Irr_{p'}(B_0(G))$ lying
over~$\tau$. By hypothesis, $d_{\chi 1_G}\neq 0$, and then $\chi^\circ_M$
contains $1_M$, so $d_{\tau 1_M}\neq 0$. Since $MP/M$ is a $p$-group, we have
that $(MP)^\circ=M^\circ$ and then $d_{\psi 1_{MP}}\neq 0$, as wanted.
\end{proof}

We next prove Theorem A for $p$-solvable groups.

\begin{thm}   \label{thm:psolvable}
 Let $G$ be a $p$-solvable group. Then the following are equivalent:
 \begin{enumerate}[\rm(i)]
  \item For every $\chi\in\Irr_{p'}(B_0(G))$ we have $d_{\chi 1_G}\neq 0$.
  \item For every $\chi\in\Irr_{p'}(B_0(G))$ we have $d_{\chi 1_G}=1$.
  \item For $P\in\Syl_{p}(G)$ we have $\norm G P=P\times K$ for some $K\le G$.
 \end{enumerate}
\end{thm}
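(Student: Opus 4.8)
The plan is to prove the cycle of implications (iii)$\Rightarrow$(ii)$\Rightarrow$(i)$\Rightarrow$(iii), since (ii)$\Rightarrow$(i) is trivial. The direction (iii)$\Rightarrow$(ii) should be the most substantial: assuming $\norm G P=P\times K$, one wants to show every $\chi\in\Irr_{p'}(B_0(G))$ has $d_{\chi 1_G}=1$. A natural route is via the $p$-solvable structure and a theorem of Okuyama--Wajima (or the Fong--Swan theorem together with the Glauberman correspondence / Isaacs' $B_\pi$-theory): in a $p$-solvable group every $\varphi\in\IBr(B_0(G))$ lifts to an ordinary character, and $\IBr(B_0(G))$ is in bijection with $\Irr_{p'}(B_0(G))$ with triangular decomposition matrix having $1$'s on the diagonal (indeed, by the Fong--Reynolds and Clifford-theoretic reductions one can assume $O_{p'}(G)$ is central and then use that $B_0(G)$ has a unique irreducible Brauer character below the trivial one). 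Concretely, I would instead argue: $d_{\chi 1_G}\ne 0$ means $1_G^\circ$ is a constituent of $\chi^\circ$, i.e.\ (by Brauer reciprocity) that the trivial character appears in $\chi$'s restriction to $p$-regular elements; for $p$-solvable groups with self-normalising-up-to-direct-factor Sylow subgroups, work of Navarro--Tiep (the case already treated in the proof of \cite[Thm~2.6]{NT}) gives that $G$ modulo $O_{p'}(G)$ has a normal $p$-complement after passing to $PM$, and on such groups $B_0$ is nilpotent-by-$p'$, forcing $d_{\chi 1_G}=1$.

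For (i)$\Rightarrow$(iii) I would argue by contradiction and induction on $|G|$. Suppose $d_{\chi 1_G}\ne 0$ for all $\chi\in\Irr_{p'}(B_0(G))$ but $\norm G P\ne P\times K$. First reduce to the case $O_{p'}(G)=1$: if $N:=O_{p'}(G)\ne 1$, then $B_0(G)$ dominates $B_0(G/N)$, characters in $\Irr_{p'}(B_0(G))$ containing $N$ in their kernel are exactly $\Irr_{p'}(B_0(G/N))$, decomposition numbers are preserved, and $\norm G P=P\times K$ iff $\norm{G/N}{PN/N}$ has the analogous form (using that $\norm G P$ covers $\norm{G/N}{PN/N}$ and $N\cap\norm G P=O_{p'}(\norm G P)$); so we may assume $O_{p'}(G)=1$, hence $C:=O_p(G)\ne 1$ and in fact $C_G(C)\le C$. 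Next apply Lemma~\ref{lem:decquotientpgroup} with $M:=O_p(G)$ (or $M:=F(G)$): every $\psi\in\Irr_{p'}(B_0(MP))$ has $d_{\psi 1_{MP}}\ne 0$. If $MP<G$ we would like to invoke induction on $MP$, but $\norm{MP}{P}$ relates to $\norm G P$ only after more work; the cleaner move is to note that $\norm G P=P\times K$ fails iff $P$ does not act trivially (mod $P$) in the relevant way, and to produce, using the failure, a specific $p'$-character in $B_0(G)$ whose restriction to $p$-regular elements misses $1_G$.

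The key construction I expect to carry the (i)$\Rightarrow$(iii) direction: with $O_{p'}(G)=1$ and $C=O_p(G)$, consider the action of $\norm G P$ on $\Irr(C)$ or on a suitable chief factor inside $C$; if $\norm G P$ does \emph{not} split as $P\times K$, then $K:=O^p(\norm G P)$ does not centralise $P/\Phi(P)[P,P]$-type data, which via Gallagher/Clifford theory over $C$ yields a linear or small-degree character $\lambda$ of $C$ with non-trivial $p'$-part of the inertia group, inducing to an irreducible $p'$-character $\chi\in\Irr_{p'}(B_0(G))$ lying over a non-principal character of $C$; such a $\chi$ vanishes on all $p$-singular classes compatible with forcing $d_{\chi 1_G}=0$ because $1_G$ restricted to $C$ is principal while $\chi$ restricted to $C$ is not, and on $p$-regular elements of $G$ this obstruction survives (here one uses that $C=O_p(G)\le \ker(\text{every Brauer character})$? — no: rather, that in $B_0(G)$ every $\varphi\in\IBr$ has $O_{p'}(G)$ in its kernel and $C$ acts via a $p$-group, so $\varphi^\circ$ cannot see the $C$-character of $\chi$). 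The main obstacle will be making this last step rigorous: translating "$\norm G P\ne P\times K$" into the existence of a concrete $\chi\in\Irr_{p'}(B_0(G))$ with $d_{\chi 1_G}=0$, which is essentially the content of \cite[Thm~2.6]{NT}'s $p$-solvable core argument combined with the Navarro--Tiep character constructed there; I would lean on that machinery, citing \cite{NT} for the detailed bookkeeping, and restrict my original contribution to checking that everything stays inside the principal block $B_0(G)$ rather than the whole of $\Irr_{p'}(G)$.
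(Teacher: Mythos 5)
There is a genuine gap, and it sits precisely in the only nontrivial implication (i)$\Rightarrow$(iii). Your reduction to $\bO_{p'}(G)=1$ is fine, but after that you miss the decisive observation on which the paper's proof turns: a $p$-solvable group with $\bO_{p'}(G)=1$ has a \emph{unique} $p$-block (Fong), so $\Irr_{p'}(B_0(G))=\Irr_{p'}(G)$ and hypothesis (i) becomes literally the hypothesis of the $p$-solvable case of \cite[Thm~B]{NT}; that theorem then gives (iii) at once, so no new character with $d_{\chi 1_G}=0$ has to be constructed and there is nothing to ``keep inside $B_0(G)$''. Instead you sketch a construction whose key mechanism is false: lying over a non-principal character of $C=\bO_p(G)$ does \emph{not} force $d_{\chi 1_G}=0$. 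Every $\vhi\in\IBr(G)$ has $\bO_p(G)$ in its kernel, but the non-trivial elements of $C$ are $p$-singular, so no contradiction with $d_{\chi 1_G}\neq 0$ arises; for instance any non-trivial linear character $\lambda$ of a $p$-group $G$ lies over a non-principal character of $\bO_p(G)=G$ and yet $\lambda^\circ=1_G^\circ$, so $d_{\lambda 1_G}=1$ (or take the degree-$3$ character of $\fA_4$ at $p=2$). Since you then explicitly defer the rest to unspecified ``machinery'' from \cite[Thm~2.6]{NT}, the hard direction is left unproven.

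The direction (iii)$\Rightarrow$(ii) in your proposal also rests on an incorrect intermediate claim. From $\norm GP=P\times K$ one gets $K\le\bO_{p'}(G)$ by \cite[Thm~3.2]{NTV}, hence $G/\bO_{p'}(G)$ has a \emph{self-normalising} Sylow $p$-subgroup; but this does not yield a normal $p$-complement modulo $\bO_{p'}(G)$ (that would make $G$ $p$-nilpotent, which is far too strong and false for $p$-solvable groups in general), and ``$B_0$ is nilpotent-by-$p'$'' is neither meaningful here nor needed. The correct finish is simply that $\Irr_{p'}(B_0(G))=\Irr_{p'}(B_0(G/\bO_{p'}(G)))$ with the same decomposition numbers relative to the trivial Brauer character, after which \cite[Thm~B]{NT} applied to $G/\bO_{p'}(G)$ gives (i) and (ii); none of the Okuyama--Wajima/Fong--Swan/$B_\pi$ apparatus is required. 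In short, both nontrivial implications should be routed through \cite[Thm~B]{NT} via \cite[Thm~3.2]{NTV}, the identification of $\Irr_{p'}(B_0(G))$ with $\Irr_{p'}(B_0(G/\bO_{p'}(G)))$, and the one-block fact when $\bO_{p'}(G)=1$; your proposal reaches neither conclusion as written.
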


\begin{proof}
We first prove (iii) implies~(ii). By \cite[Thm~3.2]{NTV} we have
$K\subseteq\bO_{p'}(G)=:X$. Now, let $\overline{G}=G/X$ and $\overline{P}=PX/X$,
then (iii) implies that $\norm {\overline{G}} {\overline{P}}\cong\overline{P}$.
Since $\Irr_{p'}(B_0(G))=\Irr_{p'}(B_0(G/X))$ we know by \cite[Thm~B]{NT}
that (i) and (ii) hold.
\medskip

Since (ii) implies (i) trivially, we just need to show that (i) implies (iii).
We proceed by induction
on $|G|$. Let $N=\bO_{p'}(G)$ and
use the bar notation. Since $\Irr_{p'}(B_0(G))=\Irr_{p'}(B_0(\bar{G}))$, if
$N>1$, we have by induction that $\norm {\bar G}{\bar P}=\bar{P}\times\bar{K}$.
By \cite[Thm~3.2]{NTV} we have that $\bar{K}\subseteq \bO_{p'}(\bar G)=1$ so
$\bar K=1$ and hence $\norm {\bar G}{\bar P}=\bar{P}$. This implies that
$\norm G P=P\times\cent N P$ and we are done. So we may assume that $N=1$. But
in this case the principal $p$-block is the only $p$-block of $G$, and we are
done by \cite[Thm~B]{NT}.
\end{proof}

We finally prove Theorem A.

\begin{thm}   \label{thm:reduction}
 Let $G$ be a finite group. Assume that $p>3$ and all non-abelian composition
 factors of $G$ of order divisible by $p$ satisfy Property~$(*)$. Then the
 following are equivalent:
 \begin{enumerate}[\rm(i)]
  \item For every $\chi\in\Irr_{p'}(B_0(G))$ we have $d_{\chi 1_G}\neq 0$.
  \item For every $\chi\in\Irr_{p'}(B_0(G))$ we have $d_{\chi 1_G}=1$.
  \item For $P\in\Syl_{p}(G)$ we have $\norm G P=P\times K$ for some $K\le G$.
 \end{enumerate} 
\end{thm}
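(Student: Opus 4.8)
The plan is to reduce Theorem~\ref{thm:reduction} to the $p$-solvable case (Theorem~\ref{thm:psolvable}) and to the almost simple analysis of Section~\ref{sec:simple} by induction on $|G|$, treating the three implications separately. The implication (ii)$\Rightarrow$(i) is trivial, so the work lies in (iii)$\Rightarrow$(ii) and (i)$\Rightarrow$(iii); both will be handled by induction on $|G|$, with the base of the induction being either $G$ $p$-solvable (done) or $G$ having a nonabelian simple composition factor of order divisible by $p$ on which Property~$(*)$ is available.

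\textbf{Proof of (iii)$\Rightarrow$(ii).} First I would dispose of the case $\bO_{p'}(G)>1$ by passing to $\bar G=G/\bO_{p'}(G)$: since $\Irr_{p'}(B_0(G))=\Irr_{p'}(B_0(\bar G))$ and the Sylow normaliser condition passes to quotients (using \cite[Thm~3.2]{NTV} to see the complement is absorbed into $\bO_{p'}$), induction applies. So assume $\bO_{p'}(G)=1$; then $\bO_\infty(G)=\bO_{p}(G)\le P$. If $G$ is $p$-solvable we are done by Theorem~\ref{thm:psolvable}. Otherwise $G$ has a nonabelian chief factor; using the condition $\norm G P=P\times K$ and standard arguments (as in \cite{NT,NT2}) one shows that a minimal normal subgroup $N=S_1\times\cdots\times S_t$ with $S_i\cong S$ simple and $p\mid|S|$ cannot exist, because the induced subgroup of $\norm{N}{P\cap N}$ would have to be a $p$-group, forcing $\norm{S}{P\cap S}$ to have a normal $p$-complement — contradicting Property~$(*)$ via the known equivalence between self-normalising-type conditions and decomposition numbers in the almost simple case. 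The real content here is translating the factored-normaliser hypothesis into the statement that every almost simple section inherits a Sylow normaliser with normal $p$-complement, and then invoking Property~$(*)$ to rule this out.

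\textbf{Proof of (i)$\Rightarrow$(iii).} Again reduce modulo $\bO_{p'}(G)$ and assume $\bO_{p'}(G)=1$, so $F^*(G)\ge\bO_p(G)$ and any minimal normal subgroup is either a $p$-group or of the form $N=S_1\times\cdots\times S_t$ with $S$ nonabelian simple. If $G$ is $p$-solvable, Theorem~\ref{thm:psolvable} finishes it. Otherwise pick such an $N$ with $p\mid|S|$. Using Lemma~\ref{lem:decquotientpgroup} with $M=N$, hypothesis (i) forces $d_{\psi 1_{NP}}\neq0$ for all $\psi\in\Irr_{p'}(B_0(NP))$; restricting to $N$ and using Clifford theory together with the $P$-action permuting the factors $S_i$, I would extract an almost simple group $H$ with socle $S$ and $H/S$ a $p$-group for which $d_{\chi 1_H}\neq0$ for all $\chi\in\Irr_{p'}(B_0(H))$ — directly contradicting Property~$(*)$. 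Hence every minimal normal subgroup of $G=G/\bO_{p'}(G)$ is a $p$-group, i.e.\ $G$ is $p$-solvable (indeed $p$-constrained), and Theorem~\ref{thm:psolvable} applies.

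\textbf{Main obstacle.} The delicate point is the Clifford-theoretic descent from $NP$ (with $N$ a direct product of copies of $S$ permuted by $P$) to a single almost simple group $H$ with $|H:S|$ a $p$-power: one must control how the height-zero characters of $B_0(S)$ combine into a $P$-invariant character of $N$, check that the stabiliser in $P$ of a suitable $S_i$ gives only a $p$-power outer automorphism of $S$, verify that the relevant extension/induction stays in the principal block, and ensure the decomposition number statement transfers. I would lean on Lemma~\ref{lem:murai} (to push characters up from a normal subgroup into the principal block while staying of $p'$-degree) and on the standard fact that $(MP)^\circ=M^\circ$ for $M\triangleleft MP$ with $MP/M$ a $p$-group, exactly as in the proof of Lemma~\ref{lem:decquotientpgroup}. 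With those tools the almost simple contradiction via Property~$(*)$ becomes the only genuinely new ingredient, and the restriction $p>3$ enters precisely through which simple groups are known to satisfy Property~$(*)$ (cf.\ Theorem~\ref{thm:nondef} and Propositions~\ref{prop:Sp}--\ref{prop:SU}), matching the hypothesis of the theorem.
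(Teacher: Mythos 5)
Your outline correctly isolates the two nontrivial implications and many of the right tools (reduction modulo $\bO_{p'}(G)$, Lemma~\ref{lem:murai}, Lemma~\ref{lem:decquotientpgroup}, the fact that $p$-regular elements of $MP$ lie in $M$, and the final contradiction with Property~$(*)$ for an almost simple group of $p$-power index over $S$), but the argument for (i)$\Rightarrow$(iii) has a genuine structural gap. After assuming $\bO_{p'}(G)=1$ and $G$ not $p$-solvable, you pick a minimal normal subgroup $N=S_1\times\cdots\times S_t$ with $S$ non-abelian of order divisible by $p$; such an $N$ need not exist, since all minimal normal subgroups could be $p$-groups while $G$ is still not $p$-solvable (e.g.\ $G=V\rtimes S$ with $V$ a faithful irreducible module over $\FF_p$ and $S$ non-abelian simple). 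For the same reason your closing inference ``every minimal normal subgroup of $G/\bO_{p'}(G)$ is a $p$-group, i.e.\ $G$ is $p$-solvable'' is false. The paper's proof needs its Step~2, which eliminates the whole $p$-solvable radical by induction together with \cite[Thm~3.2]{NTV}, \cite[Thm~1.1]{GMN} and Theorem~\ref{thm:psolvable}; you never invoke \cite{GMN}, which is indispensable there (and again later). Moreover, the descent you defer as the ``main obstacle'' is exactly where the remaining work lies: the paper first proves (Steps~3--4, again using Lemma~\ref{lem:decquotientpgroup}, induction and \cite{GMN}) that $G=(H\cap X)P$, forcing $P$ to act transitively on the $S_i$, and only then runs the character argument in $SR$ with $R=\norm P S$: take $\gamma\in\Irr_{p'}(B_0(SR))$, restrict to $S$, spread over the factors to a $P$-invariant $\eta\in\Irr_{p'}(B_0(N))$ via \cite[Lemma~4.1(ii)]{NTT}, extend to $PN$, apply Murai's lemma in $G$ and \cite[Lemma~2.3]{NT}. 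Note also that $S\norm P S$ need not be almost simple (since $\cent P S$ may be nontrivial), so ``extracting $H$'' is not immediate: the paper sidesteps this by applying induction and \cite{GMN} to $SR$ when $SR<G$, and only uses Property~$(*)$ directly when $SR=G$, where Step~2 gives $\cent G S=1$.

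Your treatment of (iii)$\Rightarrow$(ii) is also off target. Property~$(*)$ is a one-way existence statement about decomposition numbers of almost simple groups; it says nothing about Sylow normalisers, so ``$\norm S{P\cap S}$ has a normal $p$-complement, contradicting Property~$(*)$ via the known equivalence'' is not a valid step (no such equivalence is available at that point -- establishing it is essentially the content of the theorem being proved). The paper obtains (iii)$\Rightarrow$(ii) without Property~$(*)$ and without induction: by \cite[Thm~3.2]{NTV} the complement $K$ lies in $\bO_{p'}(G)$, so $G/\bO_{p'}(G)$ has a self-normalising Sylow $p$-subgroup, and \cite[Thm~B]{NT} applied to this quotient gives $d_{\chi 1_G}=1$ for all $\chi\in\Irr_{p'}(B_0(G))$; this is exactly the opening of the proof of Theorem~\ref{thm:psolvable}, which does not use $p$-solvability. (Your step could alternatively be repaired via \cite[Thm~1.1]{GMN}, which shows $G/\bO_{p'}(G)$ is solvable and reduces to Theorem~\ref{thm:psolvable}, but not via Property~$(*)$.)
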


\begin{proof}
As in Theorem \ref{thm:psolvable}, we just need to show that (i) implies (iii).
We work to prove this implication in a series of steps, proceeding by induction
on $|G|$.
\medskip

\textit{Step 1. We may assume $\bO_{p'}(G)=1$}. Indeed, let $N=\bO_{p'}(G)$ and
use the bar notation. Since $\Irr_{p'}(B_0(G))=\Irr_{p'}(B_0(\bar{G}))$, if
$N>1$, we have by induction that $\norm {\bar G}{\bar P}=\bar{P}\times\bar{K}$.
By \cite[Thm~3.2]{NTV} we have that $\bar{K}\subseteq \bO_{p'}(\bar G)=1$ so
$\bar K=1$ and hence $\norm {\bar G}{\bar P}=\bar{P}$. This implies that
$\norm G P=P\times\cent N P$ and we are done.
\medskip

\textit{Step 2. If $1<M\lhd G$  then $M$ is not $p$-solvable. In particular,
if $N$ is a minimal normal subgroup of $G$, then $N$ is semisimple of order
divisible by $p$}.
Let $M\lhd G$ be the largest $p$-solvable normal subgroup of~$G$. We claim that
$M=1$. Let $\bar{G}=G/M$ and use the bar notation. Suppose $M>1$.
Since $\Irr_{p'}(B_0(\bar{G}))\subseteq\Irr_{p'}(B_0(G))$ we have by
induction that $\norm {\bar{G}}{\bar{P}}=\bar{P}\times\bar{K}$.
By \cite[Thm~3.2]{NT} this implies that $\bar{K}\subseteq\bO_{p'}(\bar{G})$.
Since $M$ is the largest normal $p$-solvable subgroup of $G$,
$\bO_{p'}(\bar{G})$ is trivial and hence $\norm {\bar{G}}{\bar{P}}=\bar{P}$.
By \cite[Thm~1.1]{GMN} this forces $\bar{G}$ to be solvable. Hence $G$ is
$p$-solvable and we are done by Theorem \ref{thm:psolvable}.
\medskip

From now on, let $N=S_1\times \cdots\times S_t$ be a minimal normal subgroup of $G$, where $S_i\cong S$ is a nonabelian simple group of order divisible by $p$.
\medskip

\textit{Step 3. Let $X/N=\bO_{p'}(G/N)$. Then
$G=XP$.}
Since $\Irr_{p'}(B_0(G/N))\subseteq\Irr_{p'}(B_0(G))$, by induction we have
$\norm G P N/N=\norm {G/N}{PN/N}=PN/N\times K/N$ for some $K\le G$. Then
$K/N\subseteq X/N$ by \cite[Thm~3.2]{NTV}. Write $H=XN\norm G P=XPNK=XP$.
By Lemma~\ref{lem:decquotientpgroup} applied to $X$ in place of $M$, we obtain
that $H$ satisfies~(i). Since the non-abelian composition factors of $H$ are composition factors of $G$, if $H<G$, by induction we have $\norm G P=\norm H P=P\times L$ for some $L$ and we are done. Hence we may assume that $H=G$.
\medskip

\textit{Step 4.  Let $H=\bigcap\norm {G} {S_i}$ and let $X$ be as in Step 3. Then $G=(H\cap X)P$.}
Write $Y=(H\cap X)P$. We claim first that $\bO_{p'}(Y)=1$. Indeed,
$\bO_{p'}(Y)\cap (H\cap X)\subseteq\bO_{p'}(H\cap X)=1$ by Step 1. Now,
$\bO_{p'}(Y)\cong (H\cap X)\bO_{p'}(Y)/(H\cap X)$ is a $p$-group, so
$\bO_{p'}(Y)=1$ as wanted. By Lemma \ref{lem:decquotientpgroup} applied to
$H\cap X$ in place of $M$, we have that $Y$ satisfies (i). Since the non-abelian composition factors of $Y$ are composition factors of $G$, if $Y<G$, by
induction this gives $\norm Y P=P\times K$ for some $K$. But then
$K\subseteq\bO_{p'}(Y)$ by \cite[Thm~3.2]{NTV}, so $K=1$. This means that
$\norm Y P=P$ and then by \cite[Thm~1.1]{GMN} the group $Y$ is solvable.
But then $N$ is solvable, a contradiction.   Hence $Y=G$ as wanted.
\medskip

\textit{Step 5. Final step.}
Let $H=\bigcap\norm {G} {S_i}$ and $X$ be as before. Since $G=(H\cap X)P$ and $H$ acts trivially on $\{S_1,\ldots, S_t\}$, $P$ must act transitively on the set $\{S_1,\ldots, S_t\}$. Write $S=S_1$ and, for $i=2,\ldots,t$, write $S_i=S^{x_i}$ with $x_i\in P$. We proceed now as in the proof of \cite[Thm~2.6]{NT}.
Let $R=\norm P S$. If $SR=G$, then $S=N$ and $\cent G S$ is a normal $p$-subgroup of $G$. By Step~2, $\cent G S=1$ and hence $G$ is almost simple with socle $S$ and $|G:S|$ a power of $p$. Since $S$ satisfies Property~$(*)$ by
assumption, we have a contradiction. Hence we may assume that $SR<G$.

Let $Q=P\cap N$ and let $R_1=R\cap S=Q\cap S=P\cap S\in\Syl_p(S)$. Let $\gamma\in\Irr_{p'}(B_0(SR))$ and notice that $\gamma_S=\psi\in\Irr_{p'}(B_0(S))$ since $SR/S$ is a $p$-group. For $i=2,\ldots,t$, let $\psi_i=\psi^{x_i}\in\Irr_{p'}(B_0(S_i))$ and let $\eta=\psi\times\psi_2\times\cdots\times \psi_t\in\Irr_{p'}(B_0(N))$, which is $P$-invariant by \cite[Lemma~4.1(ii)]{NTT}. Then $\eta$ extends to $PN$. By Lemma~\ref{lem:murai} and hypothesis we have $d_{\eta 1_N}\neq 0$. By \cite[Lemma~2.3]{NT} we have that $d_{\psi 1_S}\neq 0$ and then, since $SR/S$ is a $p$-group, we conclude that $d_{\gamma 1_{SR}}\neq 0$. Since $SR<G$ and $S$ is a composition factor of $G$, this implies $\norm {SR} R=R\times K$ for some $K$. Then $K\subseteq\bO_{p'}(SR)$. But $\bO_{p'}(SR)=1$ (as before, $\bO_{p'}(SR)\cap S=1$, so $\bO_{p'}(SR)\cong S\bO_{p'}(SR)/S$ is a $p$-group, and hence trivial), so $K=1$ and then $\norm {SR} R=R$. Now by \cite[Thm~1.1]{GMN}, $SR$ is solvable, and hence $S$ is solvable, a final contradiction.
\end{proof}

We remark that, as happens in \cite{NT}, (iii)$\Rightarrow$(ii)$\Rightarrow$(i)
is always true if $p>3$, as is shown in the first lines of the proof of
Theorem~\ref{thm:psolvable} (notice that that part does not require
$p$-solvability, since \cite[Thm~3.2]{NTV} holds for odd primes).

%%%%%%%%%%%%%%%%%%%%%%%%%%%%%%%%%%%%%%%%%%%%%%%%%%%%%%%%%%%%%%%%%%%%%%%%%

\end{document}